\numberwithin{equation}{section}
\tikzset{cross/.style={cross out, draw=black, minimum size=2*(#1-\pgflinewidth), inner sep=0pt, outer sep=0pt},
cross/.default={4.5pt}}
\DeclareMathOperator{\Ker}{Ker }
\DeclareMathOperator{\Imm}{Im }
\DeclareMathOperator{\tb}{tb}
\DeclareMathOperator{\rot}{rot}
\DeclareMathOperator{\self}{sl}
\DeclareMathOperator{\lk}{\ell k}
\DeclareMathOperator{\Spin}{Spin}
\DeclareMathOperator{\Id}{Id}
\renewcommand{\geq}{\geqslant}
\renewcommand{\leq}{\leqslant} 
\renewcommand{\epsilon}{\varepsilon}
\newcommand{\Z}{\mathbb{Z}}
\newcommand{\Q}{\mathbb{Q}}
\newcommand{\F}{\mathbb{F}}
\newcommand{\C}{\mathbb C}
\newcommand{\xist}{\xi_{\text{st}}}
\newcommand{\x}{\mathbf{x}}
\newcommand{\y}{\mathbf{y}}
\newcommand{\e}{\mathbf{e}}
\newcommand{\s}{\mathfrak{s}}
\newcommand{\uu}{\mathfrak{u}}
\newtheorem{teo}{Theorem}[section]
\newtheorem*{teo*}{Theorem}
\newtheorem{lemma}[teo]{Lemma}
\newtheorem{prop}[teo]{Proposition}
\newtheorem*{prop*}{Proposition}
\newtheorem{defin}{Definition}
\newtheorem{cor}[teo]{Corollary}
\newtheorem{quest}[teo]{Question}
\newtheorem{remark}[teo]{Remark}
\xpatchcmd{\@thm}{\thm@headpunct{.}}{\thm@headpunct{}}{}{}
\pgfplotsset{compat=1.18}
\begin{document}
\title[Holomorphic curves in Stein domains and the tau-invariant]{Holomorphic curves in Stein domains and the tau-invariant}
\author{Antonio Alfieri and Alberto Cavallo}
\address{Centre de recherches math\'ematiques (CRM), Montr\'eal (QC) H3C 3J7, Canada}
\email{antonioalfieri90@gmail.com}
\address{Institute of Mathematics of the Polish Academy of Sciences (IMPAN), Warsaw 00-656, Poland}
\email{acavallo@impan.pl}
\subjclass[2020]{57K18, 57K33, 57K43}



\vspace{-0.3cm}
\begin{abstract}
 The scope of the paper is threefold. First, we build on recent work by Hayden to compute Hedden's tau-invariant $\tau_{\xi}(L)$ in the case when $\xi$ is a Stein fillable contact structure on a rational homology sphere, and $L$ is a transverse link arising as the boundary of a pseudo-holomorphic curve. This leads to a new proof of the relative Thom conjecture for Stein domains.  Secondly, we compare the invariant $\tau_\xi$ to the Grigsby-Ruberman-Strle
 topological tau-invariant $\tau_\s$, associated to the $\Spin^c$-structure $\s=\s_\xi$ of the contact structure $\xi$, to obtain topological obstructions for a link type to admit a holomorphically fillable transverse representative. 
 Finally, we use our main result together with methods from lattice cohomology to compute the $\tau_\s$-invariants of certain links in lens spaces, and estimate their PL slice genus.
\end{abstract}
\maketitle

\thispagestyle{empty}

\section{Introduction}
A knot $K$ in $S^3$ is called slice if it bounds a smooth disk $\Delta$ in $D^4$. The question of characterising slice knots was first asked by Fox in 1985 and lead to a lot of interesting mathematics in recent years.

To obstruct a knot from being slice one can look at some classical invariants like the Arf invariant or the Tristram-Levine signatures. If these invariants do not work, one can resort to sophisticated knot invariants like the Ozsv\' ath-Szab\' o $\tau$-invariant \cite{Ozsvath}, and  the Rasmussen $s$-invariant \cite{Rasmussen}. These invariants contain a lot of information, but unlike classical invariants they can be extremely hard to compute. Nevertheless, computations can be performed for some particular classes of knots and links with nice combinatorics. This is the case of alternating knots \cite{MO}, links of plane curves singularities \cite{GN}, and \emph{quasi-positive} links. 
 
Recall that a link  $L$ in $S^3$ is called quasi-positive if it can be expressed as the closure of an $n$-braid $\beta$ that has a factorisation of the form:
\[\beta = (w_1\sigma_{j_1}w_1^{-1})\cdot...\cdot(w_b\sigma_{j_b}w_b^{-1}) \ ,\]  
 where the $\sigma_i$'s are the Artin generators of the $n$-braid group. It was shown \cite{Hedden-+,Cavallo-tau,Cavallo-qp} that the identity
\begin{equation}
 \tau(L) = \frac{ w(\beta)-n +\left|  L \right| }{2}
 \label{eq:qp}
\end{equation} 
holds for any quasi-positive link $L$ in $S^3$, where $|L|$ denotes the number of components of the link $L$, and $w(\beta)$ denotes the writhe of $\beta$, which is defined as the difference between the number of positive and negative generators in the factorisation of $\beta$. Quasi-positive links are exactly those links that arise as boundary of holomorphic curves properly embedded in $D^4$ \cite{BO,Rudolph}. 
This paper is devoted to the study of links that can be realised as boundary of pseudo-holomorphic curves in Stein domains.


\subsection*{Rational slice genus} To obstruct a knot $K$ in $S^3$ from being slice one can look at its branched double-cover $\Sigma(K)$. If the knot $K$ is slice then $\Sigma(K)$ bounds a rational homology ball $X$, namely  the branched double-cover of $D^4$ along a slice disk. This observation has been used by Lisca \cite{Lisca} to confirm the slice-ribbon conjecture for many interesting classes of knots. His work uses Donaldson's theorem and relies on some heavy, but elementary arithmetics.

One of the main limitations of Lisca's technique is that, for reasons that are not related to the topology of the knot $K$, the branched double-cover $\Sigma(K)$ may bound a rational homology ball. This is the case for the Conway knot for example, whose double-cover can be obtained by surgery on a slice knot. To obtain further information one can look at the pull-back knot $\widetilde{K} \subset \Sigma(K)$. Indeed, if a knot is slice, not only $\Sigma(K)=\partial X$ for some rational homology ball $X$ but one can also find a smooth disk $\Delta \subset X$ with $\partial \Delta =\widetilde{K}$. This leads to the following question.

\begin{quest}\label{rationalyslice} Suppose that $K$ is a knot, in a rational homology sphere $Y$, and that $Y=\partial X$ for some rational homology ball $X$. Then is it possible to find a smooth disk $\Delta \subset X$ with $\partial \Delta =K$? 
\end{quest}

If the answer to Question \ref{rationalyslice} is affirmative for a knot $K$ in $Y$ we say that $K$ is \emph{rationally slice} in $X$. Of course if $Y=\partial X$ for some rational homology ball $X$ we can consider the obvious notion of genus:
\[g^X_4(K)= \min_F\left\{g(F) : F\subset X \text{ oriented surface, } \partial F =K\right\} , \]
and define $g_4(K)= \displaystyle\min_X \left\{g^X_4(K) : X  \text{ rational homology ball with } \partial X= Y \right \}$.

\begin{remark}
One can also consider the notion of slice genus of links in rational homology spheres. In the case of links we use the negative of the Euler characteristics $-\chi(F)$,  to measure the complexity of surfaces. We define 
\[\chi_4^X(L,\Sigma)= \max_F\left\{\chi(F) : F\subset X \text{ oriented surface, } \partial F =L, [F]=\Sigma\in H_2(X,\partial X;\Z)\right\} \ .\]
If $X$ is a rational homology ball then the slice genus of $L$ in $X$ is denoted by $\chi_4^X(L)$, and again we set $\chi_4(L)$ to be the maximum of $\chi_4^X(L)$ over all rational balls $X$ bounding the underlying three-manifold. 
\end{remark}

In \cite{GRS} Grigsby, Ruberman and Strle proposed to study Question \ref{rationalyslice} with the methods of Heegaard Floer homology. They associate to a knot $K$ in a rational homology sphere $Y$, equipped with a $\Spin^c$-structure $\s$, a numerical invariant $\tau_\s(K)$, and they show that if $K$ bounds a smooth disk in a rational homology ball $X$ then $\tau_\s(K)=0$ for all $\Spin^c$-structures $\s$ extending over $X$. A refined definition of this invariant (which clarified an ambiguity in \cite{GRS}) has been given by Raoux \cite{Raoux}. Another topological $\tau$-invariant can be defined by using cobordism maps in Heegaard Floer homology: when $(Y,\s)$ bounds a suitable negative-definite four-manifold $(X,\uu)$, such that $\uu\lvert_Y=\s$, we have the distinguished non-vanishing homology class $\widehat F_{W,\mathfrak u}(\textbf e_0)\in\widehat{HF}(Y,\s)$, which allows us to introduce a different version of tau.

While this approach towards answering Question \ref{rationalyslice} sounds very promising it has been very little explored. One of the main issues is that there are not many computational techniques available. We note that Celoria \cite{Celoria} developed a software based on grid homology\footnote{Available at  \url{https://sites.google.com/view/danieleceloria/programs/grid-homology}.} that can be used to compute the  $\tau_\s$-invariant of knots and links in lens spaces. 
Furthermore, the first author \cite{Alfieri} found a formula based on lattice cohomology which applies to certain knots in almost rational plumbed three-manifolds. In this paper we perform computations in the case of quasi-positive links, and investigate how the invariants $\tau_\mathfrak{s}$ relate to contact geometry.  

\subsection*{Main results} 
If $(M,\xi)$ is a contact three-manifold then the homotopy class of  $\xi$ specifies a canonical $\Spin^c$-structure $\s:=\s_\xi$ and thus a somewhat canonical tau-invariant $\tau_\s$. Alternatively, Hedden \cite{Hedden} constructed another link invariant $\tau_\xi$. This is defined using the contact invariant $\widehat c(\xi)$ of Ozsv\'ath and Szab\'o \cite{OSz-contact}, and unlike $\tau_\s$ depends on the geometry of $\xi$: homotopic contact plane distributions may have different $\tau_\xi$-invariants. In addition, if $(M,\xi)$ is the boundary of a Stein domain $(W,J)$ (or more generally, of a strong symplectic filling) then we can also consider the invariant $\tau_\theta$, induced by the cobordism map $\widehat F_{W,J}:\widehat{HF}(S^3)\rightarrow\widehat{HF}(Y,\s)$. We compare the definition of $\tau_\mathfrak{s},\:\tau_\xi$ and $\tau_\theta$ in Section \ref{section:two} below. 

The contact invariant $\widehat c(\xi)$ is sent to $c^+(\xi)$ by the map $\widehat{HF}(-Y,\s)\rightarrow HF^+(-Y,\s)$; moreover, we can compose this map with the projection $HF^+(-Y,\s)\rightarrow HF^+_{\text{red}}(-Y,\s)$. When $c^+(\xi)$ is vanishing in $HF^+_{\text{red}}(-Y,\s)$ then it follows from Heegaard Floer theory that the invariant can be identified with the distinguished homology class $\Theta^+\in HF^+(-Y,\s)$, see \cite[Section 2]{MT} for details. Our main result can be stated as follows.


\begin{teo} \label{teo:main}
 Suppose that $M$ is a rational homology sphere equipped with a contact structure $\xi$, and let $(W,J)$ be a Stein filling of $(M,\xi)$. If $T$ is a transverse link in $(M,\xi)$ which is the boundary of a properly embedded  pseudo-holomorphic curve $C\subset W$ then
 \[ 2\tau_\xi(T)-|T| = \self_\Q(T) = -\chi(C) - [C]\cdot[C] + c_1(J)[C]\:,\]
 where $\self_\Q(T)$ denotes the rational self-linking number of $T$ in $(M,\xi)$. 
\end{teo}
As a corollary we get a new proof of the Relative Thom conjecture for Stein domains.

\begin{teo}[Relative Thom conjecture for Stein fillings]  \label{teo:relative}
 If $(W,J)$ is a Stein filling of a rational homology sphere then a properly embedded pseudo-holomorphic curve in $W$ maximises the Euler characteristic within its relative homology class.
\end{teo}

The version for symplectic curves was proved in \cite{GK}.  Their proof builds on the proof of the symplectic Thom conjecture for closed manifolds \cite{OSz} and uses the existence of symplectic caps \cite{Eliashberg-caps}. Another proof was suggested by Kronheimer based on some work of Mrowka and Rollin \cite{MR}. Our proof is based on Heegaard Floer homology instead, and is closer in spirit to Rasmussen's original proof of the Milnor conjecture \cite{Rasmussen}. The main ingredients of our proof of Theorem \ref{teo:main} are some recent work by Hayden \cite{Hayden} that we combine with ideas  by Hedden, Plamenevskaya, and the second author \cite{Hedden-+,Plamenevskaya,Cavallo-qp}.

Note that Theorem \ref{teo:main} also implies the following result.
\begin{prop}
 \label{prop:relative}
 Under the assumptions of Theorem \ref{teo:main}, if $\theta=\widehat F_{W,J}(\textbf e_0)\in\widehat{HF}(M,\s)$ and $\s$ is the $\Spin^c$-structure induced by $\xi$, then $\tau_\xi(T)=\tau_\theta(T)$. Furthermore, if $c^+(\xi)=\Theta^+$ then $\tau_\xi(T)=\tau_\s(T)$.   
\end{prop}
We state the following corollary explicitly since it was the conjecture that originally motivated our work.

\begin{cor}\label{cor:intro}
 Let $L$ be a link in a three-manifold $M$ bounding a rational homology ball $W$. Suppose that $W$ has a Stein structure $J$, and that $L$ bounds a properly embedded pseudo-holomorphic curve in $W$. Then one has \[2\tau_\xi(L)-|L|=-\chi^W_4(L)\:.\] In addition, if $c^+(\xi)=\Theta^+$ then $\tau_\s(L)$ is the maximal tau-invariant associated to the $\Spin^c$-structures of $M$ extending over $W$, where $\s$ is specified by the complex tangencies of $J$.
\end{cor}

Our corollaries are based on the inequality in Theorem \ref{teo:upper_bound} below. 

\begin{teo}\label{teo:upper_bound}
 Let $T$ be a transverse link in a contact three-manifold $(M,\xi)$. If $(M,\xi)$ admits a strong symplectic filling $(W,\omega)$
 then:
 \[\self_\Q(T) \leq 2 \tau_\theta(T) -|T|  \ , \] where $\theta=\widehat F_{W,J}(\textbf e_0)\in\widehat{HF}(M,\s)$ and $\s,J$ denote the $\Spin^c$-structures induced by $\xi$ and $\omega$. 
\end{teo}
A version of this result when $c^+(\xi)$ is vanishing in $HF_{\text{red}}(-M,\s)$ was proved by Mark and Tosun in \cite{MT}.



The methods of this paper can also be used to find topological obstructions for a link type to be the boundary of a pseudo-holomorphic curve in some Stein fillings. We state two corollaries in this direction. In the following two statements, $L$ denotes a link in a contact three-sphere $(M,\xi)$ with $c^+(\xi)$ vanishing in $HF_{\text{red}}(-M,\s)$, and $\s$ the $\text{Spin}^c$-structure associated to $\xi$. 

\begin{cor}\label{obs2}
 Suppose $H_1(M;\Z)$ does not contain a metaboliser $G$ such that $|\tau_{\s+\alpha}(L)|\leq \tau_\s(L)$, for every $\alpha \in G$. Then $L$ does not bound a pseudo-holomorphic curve in any rational homology ball Stein filling of $(M,\xi)$.
\end{cor}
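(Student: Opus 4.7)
The plan is to argue by contradiction. Assume that $L$ bounds a properly embedded $J$-holomorphic curve $C$ in some rational homology four-ball Stein filling $(W,J)$ of $(M,\xi)$; the goal is then to exhibit an explicit metaboliser $G\subset H_1(M;\Z)$ satisfying $|\tau_{\s+\alpha}(L)|\leq\tau_\s(L)$ for every $\alpha\in G$, contradicting the hypothesis.

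The natural candidate for $G$ is the image of the restriction $H^2(W;\Z)\to H^2(M;\Z)\cong H_1(M;\Z)$. Standard Poincar\'e--Lefschetz duality applied to the long exact sequence of the pair $(W,M)$ identifies $G$ as an isotropic subgroup of order $\sqrt{|H_1(M;\Z)|}$ with respect to the linking form---a metaboliser---and shows that the $\Spin^c$-structures on $M$ extending over $W$ form the single coset $\s+G$, where $\s=\s_\xi$. Corollary \ref{cor:intro} then immediately supplies the upper half of the desired estimate: $\tau_{\s+\alpha}(L)\leq\tau_\s(L)$ for every $\alpha\in G$, since $\tau_\s(L)$ realises the maximum of $\tau_{\s'}(L)$ over the extending $\Spin^c$-structures $\s'$.

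For the matching lower bound $\tau_{\s+\alpha}(L)\geq -\tau_\s(L)$, I pass to the orientation-reversed setting. In $-M$ the link $L$ bounds the surface $C\subset -W$, and the same metaboliser $G$ parametrises the $\Spin^c$-structures on $-M$ that extend over $-W$, this time as the coset $\bar\s+G$ (using that conjugation is the affine involution $\s+\alpha\mapsto \bar\s-\alpha$). Applying the Grigsby--Ruberman--Strle slice-genus inequality in $-W$ to the extending $\Spin^c$-structure $\bar\s+\alpha$ and the surface $C$ yields $2\tau_{\bar\s+\alpha}^{-M}(L)-|L|\leq -\chi(C)$. Combining this with the conjugation/mirror identity $\tau_{\bar\s+\alpha}^{-M}(L)=-\tau_{\s-\alpha}^{M}(L)$ and the equality $-\chi(C)=2\tau_\s(L)-|L|$ supplied by Theorem \ref{teo:main}, one obtains $\tau_{\s-\alpha}(L)\geq -\tau_\s(L)$. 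Since $-\alpha$ ranges over $G$ as $\alpha$ does, the lower bound holds for every $\alpha\in G$, and together with the upper half it produces the forbidden metaboliser.

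The main obstacle is justifying the conjugation/mirror identity invoked in the last step. I expect it to follow from the standard duality between $\widehat{HFL}(M,L,\s)$ and $\widehat{HFL}(-M,L,\bar\s)$ transferred to the filtered chain complex underlying the definition of $\tau_\s$: the filtration reverses under this duality, forcing a sign change on the associated tau-invariant. Once that symmetry is in place the remainder of the argument is a purely formal manipulation of the two inequalities.
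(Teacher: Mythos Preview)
Your overall architecture---assume $L$ bounds $C\subset W$, take $G$ to be the image of $H^2(W;\Z)\to H^2(M;\Z)$, and derive the inequality $|\tau_{\s+\alpha}(L)|\leq\tau_\s(L)$---matches the paper, and the upper half $\tau_{\s+\alpha}(L)\leq\tau_\s(L)$ via Corollary~\ref{cor:intro} is fine.

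The gap is in the lower half.  The identity you invoke, $\tau^{-M}_{\bar\s+\alpha}(L)=-\tau^{M}_{\s-\alpha}(L)$, is \emph{not} what duality produces for links with several components.  The correct statement is Lemma~\ref{lemma:mirror}: $\tau(Y,L,\s)=-\tau^{*}(-Y,L,\s)$, with the \emph{bottom} invariant $\tau^{*}$ on one side (and no conjugation of the $\Spin^c$-structure).  For knots $\tau=\tau^{*}$ and your identity reduces to the Grigsby--Ruberman--Strle symmetry, but for $\ell>1$ the two invariants differ in general, and there is no direct relation of the form you wrote.  Consequently your orientation-reversal manoeuvre, as written, yields a lower bound on $\tau^{*}(M,L,\mathfrak t)$ rather than on $\tau(M,L,\mathfrak t)$, and the argument does not close.

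The paper avoids this by appealing to Proposition~\ref{prop:bound}, which packages the two-sided estimate $-g(F)\leq\tau(L,\mathfrak t)\leq g(F)+|L|-|F|$ for every $\mathfrak t$ extending over $W$.  The lower bound there is itself obtained by an orientation-reversal argument, but carried out with the $\tau^{*}$-version of the relative adjunction inequality (Proposition~\ref{prop:two}) and then translated back via Lemma~\ref{lemma:mirror}.  With that in hand one simply observes
\[
-\tau(L,\s_\xi)=-(g(C)+|L|-|C|)\leq -g(C)\leq\tau(L,\mathfrak t)\leq g(C)+|L|-|C|=\tau(L,\s_\xi),
\]
which is the whole proof.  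If you want to salvage your route, replace the upper bound you apply in $-W$ by its $\tau^{*}$-analogue and then use Lemma~\ref{lemma:mirror}; this is exactly how Proposition~\ref{prop:bound} is proved.
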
 

The following  corollary also stroke our attention.  
\begin{cor}\label{obs1} 
 Suppose $(W,J)$ is a rational homology ball Stein filling of $(M,\xi)$, and that $\tau_{\s}(L)\neq\tau_{\overline{\s}}(L)$. Then $L$ bounds a pseudo-holomorphic curve in $W$, with respect to at most one of the two Stein structures $J$ and $-J$.
\end{cor}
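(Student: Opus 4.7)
The plan is to argue by contradiction and reduce everything to Corollary \ref{cor:intro}. Suppose that $L$ bounds both a $J$-holomorphic curve in $(W,J)$ and a $(-J)$-holomorphic curve in $(W,-J)$. The first thing I would verify is the identification of the contact and $\Spin^c$ data induced on $\partial W = M$ by the conjugate Stein structure $-J$. Since the canonical class satisfies $c_1(W,-J)=-c_1(W,J)$, and since restriction to the boundary commutes with $\Spin^c$-conjugation, the contact structure determined by the complex tangencies of $-J$ is the conjugate contact structure $\overline{\xi}$, whose underlying $\Spin^c$-structure is $\overline{\s}$. This is the only genuinely geometric point in the argument, and I would expect it to be the main place where care is needed, even though it is standard.

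Because $W$ is a rational homology four-ball, both $\s$ and $\overline{\s}$ belong to the set of $\Spin^c$-structures on $M$ that extend across $W$: they are precisely the restrictions of the canonical $\Spin^c$-structures $\s_J$ and $\s_{-J}$ of the two Stein fillings. I would then apply Corollary \ref{cor:intro} twice. Applied to $(W,J)$ together with the $J$-holomorphic filling of $L$, it yields the inequality
\[\tau_\s(L)\ \geq\ \tau_{\overline{\s}}(L),\]
since $\tau_\s(L)$ maximises over all $\Spin^c$-extensions. Applied instead to $(W,-J)$ together with the $(-J)$-holomorphic filling of $L$, exactly the same corollary gives the reverse inequality $\tau_{\overline{\s}}(L)\geq\tau_\s(L)$.

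Combining the two inequalities forces $\tau_\s(L)=\tau_{\overline{\s}}(L)$, contradicting the standing hypothesis. Hence the two scenarios cannot occur simultaneously, which is exactly the content of the corollary. The argument is essentially a formal bookkeeping consequence of Corollary \ref{cor:intro} once one knows how $\Spin^c$-structures transform under conjugating the complex structure on the filling; no new analytic input is required.
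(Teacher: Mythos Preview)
Your argument is correct and is essentially the same as the paper's: both proceed by contradiction and apply Corollary~\ref{cor:intro} once to $(W,J)$ and once to $(W,-J)$. The only cosmetic difference is that the paper uses the identity $2\tau_{\s}(L)-|L|=-\chi^W_4(L)$ from Corollary~\ref{cor:intro} (so that $\tau_{\s}(L)$ and $\tau_{\overline{\s}}(L)$ both equal the same topological quantity $\tfrac{|L|-\chi^W_4(L)}{2}$ directly), whereas you use the maximality clause of the same corollary to obtain the two opposite inequalities; the conclusion is identical.
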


\begin{figure}[t]
\vspace{0.3cm}
 \centering
  \def\svgwidth{11.5cm}
\begingroup%
  \makeatletter%
  \providecommand\color[2][]{%
    \errmessage{(Inkscape) Color is used for the text in Inkscape, but the package 'color.sty' is not loaded}%
    \renewcommand\color[2][]{}%
  }%
  \providecommand\transparent[1]{%
    \errmessage{(Inkscape) Transparency is used (non-zero) for the text in Inkscape, but the package 'transparent.sty' is not loaded}%
    \renewcommand\transparent[1]{}%
  }%
  \providecommand\rotatebox[2]{#2}%
  \newcommand*\fsize{\dimexpr\f@size pt\relax}%
  \newcommand*\lineheight[1]{\fontsize{\fsize}{#1\fsize}\selectfont}%
  \ifx\svgwidth\undefined%
    \setlength{\unitlength}{575.25201272bp}%
    \ifx\svgscale\undefined%
      \relax%
    \else%
      \setlength{\unitlength}{\unitlength * \real{\svgscale}}%
    \fi%
  \else%
    \setlength{\unitlength}{\svgwidth}%
  \fi%
  \global\let\svgwidth\undefined%
  \global\let\svgscale\undefined%
  \makeatother%
  \begin{picture}(1,0.29876515)%
    \lineheight{1}%
    \setlength\tabcolsep{0pt}%
    \put(0,0){\includegraphics[width=\unitlength,page=1]{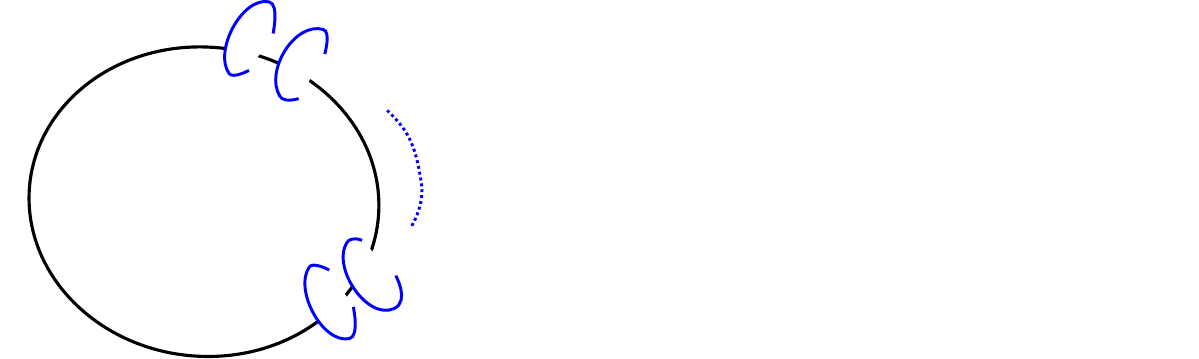}}%
    \put(0.29446815,0.26436177){\color[rgb]{0,0,0}\makebox(0,0)[lt]{\lineheight{1.25}\smash{\begin{tabular}[t]{l}$L_{2d}$\end{tabular}}}}%
    \put(-0.00106927,0.20926874){\color[rgb]{0,0,0}\makebox(0,0)[lt]{\lineheight{1.25}\smash{\begin{tabular}[t]{l}$-4$\end{tabular}}}}%
    \put(0,0){\includegraphics[width=\unitlength,page=2]{L_2d.pdf}}%
    \put(0.90703804,0.26598283){\color[rgb]{0,0,0}\makebox(0,0)[lt]{\lineheight{1.25}\smash{\begin{tabular}[t]{l}$T_{d,d}$\end{tabular}}}}%
    \put(0.61150062,0.21088984){\color[rgb]{0,0,0}\makebox(0,0)[lt]{\lineheight{1.25}\smash{\begin{tabular}[t]{l}$-1$\end{tabular}}}}%
  \end{picture}%
\endgroup%

  \vspace{0.3cm}      
 \caption{\smaller[1]{Left: the $2d$-component link $L_{2d}$ in $L(4,1)$; right: the $d$-component torus link $T_{d,d}$ in $S^3$.}}
 \label{L_2d}
 \end{figure}

\subsection*{Further applications} 
We conclude with a couple of extra applications and some examples. 
First we observe that our results can be applied to the study of the PL genus of knots and links. To make our statement precise we use the following notation: given a link $L$ in $Y$, the boundary of a rational homology four-ball $X$, we define 
\[\tau^X_{\max}(L)= \max \{\tau_\s(L) : \s \in \text{Spin}^c(Y) \text{ extending to } X\} \ .\] 
Similarly, we define the invariant $\tau^X_{\min}(L)$ as the minimum value achieved. 

\begin{teo}
 \label{teo:PL}
 Suppose that $L$ is a link in a rational homology sphere $Y$. Then 
 \[\dfrac{\left|\tau^X_{\max}(L)-\tau^X_{\min}(L)\right|}{2}\leq \mathfrak g^X_{\text{PL}}(L)\ , \]
 where $\mathfrak g^X_{\text{PL}}(L)$ denotes the minimum genus of a $PL$ surface $F$, in the rational homology four-ball $X$, with $|L|$ connected components each bounding a different component of the link $L$. 
\end{teo}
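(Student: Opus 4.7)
The strategy is to reduce the PL inequality to a smooth $\tau_\s$ genus estimate in the style of Grigsby--Ruberman--Strle, via a local excision at the cone-point singularities of the PL surface. Concretely, given a PL surface $F\subset X$ realizing $g^*_{\text{PL}}(L)$, with $|L|$ components each bounding a distinct component of $L$, I would exploit that $X$ admits a smooth structure and hence $F$ is smooth away from finitely many cone-point singularities $p_1,\dots,p_k$. Around each $p_i$ there is a smooth $4$-ball neighborhood $B_i$ such that $F\cap B_i$ is the cone on a knot $K_i\subset S_i^3=\partial B_i$.

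First I would set $X^\circ := X\setminus \bigsqcup_{i=1}^k \mathrm{int}(B_i)$ and $F^\circ := F\cap X^\circ$. Then $F^\circ$ is a smooth, properly embedded, oriented surface in $X^\circ$ of the same genus as $F$, with boundary $L\sqcup K_1\sqcup\cdots\sqcup K_k$ lying inside $\partial X^\circ = Y\sqcup S_1^3\sqcup\cdots\sqcup S_k^3$. Since $X$ is a rational homology ball and each $B_i$ is a ball, the manifold $X^\circ$ is a rational homology cobordism from $Y$ to a disjoint union of $3$-spheres; moreover, because $H^2(B^4)=0$, the set of $\Spin^c$-structures on $Y$ extending to $X$ agrees with the set of those extending to $X^\circ$.

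Next I would apply the smooth $\tau_\s$-bound to the excised surface $F^\circ$ inside the rational homology cobordism $X^\circ$. The crucial point is that $S^3$ has a unique $\Spin^c$-structure and $\tau_\s$ is additive under disjoint unions of links in disjoint manifolds, so the contribution of $K_1\sqcup\cdots\sqcup K_k$ to $\tau$ is the same regardless of the $\Spin^c$-structure chosen on $Y$. Therefore, when we form the difference $\tau_\s(L)-\tau_{\s'}(L)$ for two $\Spin^c$-structures $\s,\s'$ on $Y$ extending over $X$, the $K_i$-contributions cancel and the smooth bound yields
\[
|\tau_\s(L)-\tau_{\s'}(L)| \;\leq\; 2\, g^*_{\text{PL}}(L).
\]
Specializing $\s,\s'$ to $\Spin^c$-structures realizing $\tau^X_{\max}(L)$ and $\tau^X_{\min}(L)$ gives the desired inequality.

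The main obstacle is the formulation of the smooth genus bound in the exact form required here: $F^\circ$ is a possibly disconnected surface in a rational homology cobordism whose boundary has several components, and the link $\partial F^\circ$ splits across those components. Once one has written down the GRS-type inequality in this setting and verified the additivity of $\tau_\s$ under disjoint unions carefully enough to ensure the cancellation of the $K_i$ terms, the rest is a formal application of the smooth bound to $F^\circ$.
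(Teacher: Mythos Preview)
Your overall strategy---excise neighbourhoods of the cone points to get a smooth surface in a rational homology cobordism, then feed this into a GRS--type genus bound and let the local knot contributions cancel---is exactly the paper's. The difference is in \emph{how} the excision is organised, and that difference is not cosmetic.

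The paper does \emph{not} remove one ball per singularity. Instead, for each component $F_i$ it chooses a single PL arc $\gamma_i\subset F_i$ passing through \emph{all} the singular points of $F_i$, takes a small tubular neighbourhood $N(\gamma_i)$ so that $F_i\cap\partial N(\gamma_i)$ is a \emph{single} knot $K_i$, and then pipes $N(\gamma_1),\dots,N(\gamma_\ell)$ together into one $4$--ball $B\subset X$. The complement $Z=X\setminus B$ is then a rational homology cobordism from a single $S^3$ to $Y$, and $F^*=F\cap Z$ has exactly $\ell$ components, each an honest knot--to--knot cobordism. This places one squarely in the hypotheses of Proposition~\ref{prop:bound} (namely $|L_0|=|L|=|F^*|$), yielding $|\tau(L,\s)-\tau(L_0)|\leq g(F^*)=g(F)$ for every extending $\s$, and the triangle inequality finishes.

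With your excision, a component $F_i$ carrying $m_i>1$ cone points produces $m_i$ boundary circles on the $S^3$--side, so $|L_0|=k=\sum m_i$ while $|F^\circ|=|L|=\ell$. The relative adjunction inequality (Proposition~\ref{prop:two}) then carries an asymmetric correction $\ell_1-|F|=k-\ell$ on one side, and this does \emph{not} cancel when you subtract the two estimates: the best you get is $|\tau_\s(L)-\tau_{\s'}(L)|\leq 2g(F)+(k-\ell)$. For a concrete failure, take $L$ a knot bounding a PL disc with two cone points: your bound gives $1$, whereas the theorem demands $0$. So the ``$K_i$--contributions cancel'' step breaks precisely because excising several disjoint balls from one $F_i$ introduces extra merge saddles in the smooth cobordism. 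The repair is exactly the paper's arc--and--pipe trick, which forces $k=\ell$.
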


To illustrate Theorem \ref{teo:PL} and  our other results we study two families of quasi-positive links. The first family is shown on the left-hand side of Figure \ref{L_2d}, and we stumbled into it while thinking about the $A_n$-realisation problem \cite{BF}. The second family, depicted in Figure \ref{M_3d}, is a family of quasi-positive links in the lens space $L(9,2)$, and we construct it by playing around with the Nagata transform relating the Hirzebruch surface $F_1=\C P^2\#\overline{\C P^2}$ to the Hirzebruch surface $F_2$. Here is one of our results.

\begin{prop} \label{prop:last}
Let $N_k$ be the link in Figure \ref{N_k}. There exists a rational homology ball Stein filling $(W,J)$ of $L(9,2)$ such that: $\tau_{\max}(N_k)=\frac{k^2+3k}{9}$, and $\tau_{\min}(N_k)=\frac{k^2-3k}{9}$. Consequently, $N_k$ does not bound a pseudo-holomorphic curve in $(W,J)$ for any $k$ that is not divisible by $3$. Furthermore, $N_k$ does not bound a $(-J)$-holomorphic curve for any $k$. 
\end{prop}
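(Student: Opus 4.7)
My plan is to derive everything from a single lattice cohomology calculation on $L(9,2)$ combined with Corollary \ref{cor:intro}. The lens space $L(9,2)$ is realised as the boundary of its standard negative-definite linear plumbing on two vertices (Euler numbers $-5$ and $-2$), which is almost rational, so the first author's formula \cite{Alfieri} reduces the computation of $\tau_{\s}(N_k)$ for each Spin$^c$-structure $\s$ to a minimisation problem on the integer lattice $\Z^2$ with the plumbing intersection form, labelled by the nine characteristic cosets that index $\Spin^c(L(9,2))$. Carrying out this optimisation for every characteristic coset and every $k$ produces a table of $\tau_{\s}(N_k)$ from which the extremal values $\tau_{\max}(N_k)=(k^2+3k)/9$ and $\tau_{\min}(N_k)=(k^2-3k)/9$ can be read off directly. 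Precisely identifying $N_k$ inside the plumbing, reading off the classes of its components in the lattice, and evaluating the formula uniformly in $k$ constitute the technical core of the argument.

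With this table in hand, both non-filling statements follow from Corollary \ref{cor:intro}. The three Spin$^c$-structures on $L(9,2)$ that extend over the rational homology ball $W$ form a coset of the unique index-three subgroup of $H_1(L(9,2);\Z)\cong\Z/9\Z$. Since $\s_\xi$ extends through $(W,J)$ and its conjugate $\overline{\s}_\xi$ extends through $(W,-J)$, this coset is stable under conjugation, and hence must coincide with the subgroup $\{0,3,6\}$ containing the spin structure. The precise location of $\s_\xi$ within this subgroup is then pinned down via the relation $d_3(\xi)=d(L(9,2),\s_\xi)$ together with the classical computation of the correction terms of lens spaces. A direct inspection of the table shows that $\tau_{\s_\xi}(N_k)$ is strictly smaller than $\tau_{\max}^W(N_k)$ exactly when $3\mid k$, while $\tau_{\overline{\s}_\xi}(N_k)$ is strictly smaller than $\tau_{\max}^W(N_k)$ for every $k$; Corollary \ref{cor:intro} then rules out a $J$-holomorphic curve in the first case and a $(-J)$-holomorphic curve in the second.

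The main obstacle is the lattice cohomology calculation itself, because Alfieri's formula must be unpacked once per Spin$^c$-structure and the resulting minimum has to be tracked as a function of $k$; I expect the competition to reduce to a small number of lattice points whose contribution depends quadratically on $k$ through the lattice coordinates and linearly through the characteristic coset representative, which is what ultimately produces the quadratic polynomials in the statement. A secondary subtlety is the convention-tracking required to match the lattice-cohomological Spin$^c$-structures with their contact-geometric counterparts $\s_\xi$ and $\overline{\s}_\xi$; the relation with $d_3(\xi)$ should, however, remove any remaining ambiguity, after which the obstruction statements follow mechanically.
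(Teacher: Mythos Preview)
Your approach to the $(W,-J)$ obstruction is essentially the paper's: once the three $\tau$-values on the extending $\Spin^c$-structures are known, the fact that $\tau(N_k,\overline{\s}_\xi)=(k^2-3k)/9$ is strictly smaller than $\tau(N_k,\s_\xi)=(k^2+3k)/9$ for every $k\geq 1$ violates the maximality conclusion of Corollary~\ref{cor:intro} (the paper phrases this through Corollary~\ref{obs2}, which amounts to the same thing here).

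There are, however, two genuine gaps in your treatment of the $(W,J)$ case. First, your plan to single out $\s_\xi$ among the three extending $\Spin^c$-structures via the relation $d_3(\xi)=d(L(9,2),\s_\xi)$ cannot work: every $\Spin^c$-structure extending over a rational homology ball has vanishing correction term, so all three candidates satisfy $d=0$ and the relation carries no information. The paper pins down $\s_\xi$ differently, by exhibiting holomorphic fillings of $N_k$ (in the linear plumbing $D_{-5,-2}$) and of $M_{3d}$ (in $W$ itself), computing $\tau_\xi$ from Theorem~\ref{teo:main}, and matching against the lattice values.

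Second, and more importantly, once $\s_\xi$ is correctly identified one finds $\tau(N_k,\s_\xi)=(k^2+3k)/9=\tau^W_{\max}(N_k)$ for \emph{every} $k$; it is never strictly smaller, so your claimed dichotomy ``$\tau_{\s_\xi}(N_k)<\tau^W_{\max}(N_k)$ exactly when $3\mid k$'' is false, and the maximality clause of Corollary~\ref{cor:intro} yields no obstruction whatsoever for $(W,J)$. The paper's argument here is of a different nature and uses integrality rather than maximality: if $N_k$ bounded a $J$-holomorphic curve $C$ in the rational ball $W$, then Theorem~\ref{teo:main} would give $2\tau_\xi(N_k)-k=-\chi(C)\in\Z$, whereas $2\tau(N_k,\s_\xi)-k=(2k^2-3k)/9$ fails to be an integer precisely when $3\nmid k$. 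This integrality obstruction is the idea your outline is missing.
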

The computations for the proof of Proposition \ref{prop:last} are based on an adaptation of the methods of lattice cohomology, a technique that was first explored in \cite{Alfieri}. Note that $N_3=M_3$ and that for this link we are able to construct a $J$-holomorphic curve in $(W,J)$. We conjecture that the link $N_{3d}$ does not bound pseudo-holomorphic curves in $(W,J)$ for $d\geq2$. We believe that it would be interesting to address similar questions for the other families we describe in the paper.

\smallskip
Finally, we mention an application that came up during a conversation with Boyer. The proof is based on some celebrated results by Loi and Piergallini \cite{LP}.

\begin{figure}[t]
\vspace{0.3cm}
 \centering
  \def\svgwidth{7.5cm}
\begingroup%
  \makeatletter%
  \providecommand\color[2][]{%
    \errmessage{(Inkscape) Color is used for the text in Inkscape, but the package 'color.sty' is not loaded}%
    \renewcommand\color[2][]{}%
  }%
  \providecommand\transparent[1]{%
    \errmessage{(Inkscape) Transparency is used (non-zero) for the text in Inkscape, but the package 'transparent.sty' is not loaded}%
    \renewcommand\transparent[1]{}%
  }%
  \providecommand\rotatebox[2]{#2}%
  \newcommand*\fsize{\dimexpr\f@size pt\relax}%
  \newcommand*\lineheight[1]{\fontsize{\fsize}{#1\fsize}\selectfont}%
  \ifx\svgwidth\undefined%
    \setlength{\unitlength}{311.24241626bp}%
    \ifx\svgscale\undefined%
      \relax%
    \else%
      \setlength{\unitlength}{\unitlength * \real{\svgscale}}%
    \fi%
  \else%
    \setlength{\unitlength}{\svgwidth}%
  \fi%
  \global\let\svgwidth\undefined%
  \global\let\svgscale\undefined%
  \makeatother%
  \begin{picture}(1,0.49212209)%
    \lineheight{1}%
    \setlength\tabcolsep{0pt}%
    \put(0,0){\includegraphics[width=\unitlength,page=1]{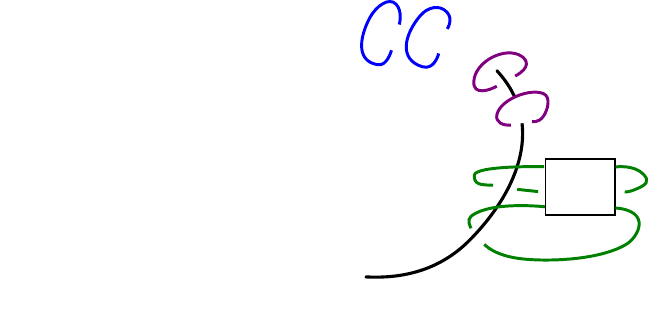}}%
    \put(0.85728917,0.1878178){\color[rgb]{0,0,0}\makebox(0,0)[lt]{\lineheight{1.25}\smash{\begin{tabular}[t]{l}$-1$\end{tabular}}}}%
    \put(0.83218402,0.0156477){\color[rgb]{0,0,0}\makebox(0,0)[lt]{\lineheight{1.25}\smash{\begin{tabular}[t]{l}$M_{3d}$\end{tabular}}}}%
    \put(0,0){\includegraphics[width=\unitlength,page=2]{M_3d.pdf}}%
    \put(0.16153584,0.00611883){\color[rgb]{0,0,0}\makebox(0,0)[lt]{\lineheight{1.25}\smash{\begin{tabular}[t]{l}$-2$\end{tabular}}}}%
    \put(0.58462398,0.00611897){\color[rgb]{0,0,0}\makebox(0,0)[lt]{\lineheight{1.25}\smash{\begin{tabular}[t]{l}$-5$\end{tabular}}}}%
  \end{picture}%
\endgroup%

 \vspace{0.3cm}       
 \caption{\smaller[1]{The link $M_{3d}$ in $L(9,2)$. The $-1$ denotes a negative full-twist. There are $d$ blue, $d$ purple and $d$ green components for a total of $3d$ unframed components.}}
 \label{M_3d}
 \end{figure}

\begin{teo}\label{teo:double} 
 If $K$ is a quasi-alternating quasi-positive knot in $S^3$ then $\tau_{\s_0}(\widetilde{K})=\tau(K)$, where $\widetilde{K}\subset \Sigma(K)$ denotes the fixed point set of the branched double-covering involution, and $\s_0$ is the unique spin structure on $\Sigma(K)$.
\end{teo}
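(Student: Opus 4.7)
The plan is to realize $\widetilde{K}$ as the boundary of a $J$-holomorphic curve in a Stein filling of $\Sigma(K)$, apply Theorem \ref{teo:main}, and then use the L-space property of $\Sigma(K)$ to pass from $\tau_\xi$ to $\tau_{\s_0}$. Since $K$ is quasi-positive, Rudolph's theorem produces a holomorphic surface $C\subset B^4$ with $\partial C=K$ realizing the slice genus, so $\chi(C)=1-2\tau(K)$ by the identity $g_4(K)=\tau(K)$ recorded in \eqref{eq:qp}. By the Loi--Piergallini theorem, the double cover $\pi\colon W\to B^4$ branched along $C$ carries a Stein structure $J$, and the branch locus $\widetilde{C}=\pi^{-1}(C)$ is a properly embedded $J$-holomorphic curve in $W$ with $\partial\widetilde{C}=\widetilde{K}$ transverse in $(\Sigma(K),\xi)$, where $\xi$ denotes the contact structure on $\Sigma(K)$ induced by $J$.

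Applying Theorem \ref{teo:main} to $(W,J)$ and $\widetilde{K}$ gives
\[
\tau_\xi(\widetilde{K})=-\frac{\chi(\widetilde{C})-1+c_1(J)[\widetilde{C}]+[\widetilde{C}]^2}{2}.
\]
The key simplification is the Hurwitz formula $K_W=\pi^*K_{B^4}+\widetilde{C}$: since $B^4$ has trivial canonical bundle this gives $c_1(J)=-\mathrm{PD}[\widetilde{C}]$, and combining with the projection formula together with the identities $\pi^*[C]=2[\widetilde{C}]$, $\pi_*[\widetilde{C}]=[C]$, and the vanishing $[C]^2_{B^4}=0$ (which is immediate from $H_2(B^4;\Q)=0$) yields $[\widetilde{C}]^2=0$ and $c_1(J)[\widetilde{C}]=0$. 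Combined with $\chi(\widetilde{C})=\chi(C)=1-2\tau(K)$, the displayed formula collapses to $\tau_\xi(\widetilde{K})=\tau(K)$.

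It remains to convert $\tau_\xi(\widetilde{K})$ into $\tau_{\s_0}(\widetilde{K})$. Quasi-alternatingness of $K$ implies $\Sigma(K)$ is an L-space by Ozsv\'ath--Szab\'o, so each $\widehat{HF}(-\Sigma(K),\s)$ is one-dimensional; Stein fillability of $\xi$ makes $c(\xi)$ non-zero, hence a generator of $\widehat{HF}(-\Sigma(K),\s_\xi)$, which forces $\tau_\xi=\tau_{\s_\xi}$ on any link. Finally, the covering involution acts as $-1$ on $H_1(\Sigma(K);\Z)$ (a standard feature of double branched covers of $S^3$ along knots) and fixes $\widetilde{K}$ pointwise, so $2[\widetilde{K}]=0$; since $\det K$ is odd (as always for a knot), $[\widetilde{K}]=0$. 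The restriction of $c_1(J)=-\mathrm{PD}[\widetilde{C}]$ to $\partial W$ is then the Poincar\'e dual of $[\widetilde{K}]$, which vanishes, so $c_1(\s_\xi)=0$, pinning down $\s_\xi=\s_0$ as the unique self-conjugate $\Spin^c$-structure refining the spin structure of $\Sigma(K)$.

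The main technical obstacle is to justify rigorously the Hurwitz identity and projection formula in this relative setting: both are classical for closed complex surfaces, whereas here $W$ and $B^4$ have contact boundary and $[\widetilde{C}]$ lives in $H_2(W,\partial W;\Q)$. I would handle this using the canonical isomorphism $H_2(W,\partial W;\Q)\cong H_2(W;\Q)$ (available because $\partial W$ is a rational homology sphere) together with Lefschetz-dual formulations of both identities, or equivalently by comparing the formula of Theorem \ref{teo:main} applied upstairs to $(\widetilde{K},\widetilde{C})$ with the same formula applied downstairs to $(K,C)\subset(S^3,B^4)$.
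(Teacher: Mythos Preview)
Your overall strategy matches the paper's: realize $\widetilde{K}$ as the boundary of the lifted curve $\widetilde{C}$ in the Loi--Piergallini Stein filling, apply Theorem~\ref{teo:main}, use $\chi(\widetilde{C})=\chi(C)$, and invoke the $L$-space property of $\Sigma(K)$ to pass from $\tau_\xi$ to $\tau_{\s_\xi}$. The difference is in how the correction terms $c_1(J)[\widetilde{C}]+[\widetilde{C}]^2$ are killed. The paper avoids Hurwitz and the projection formula entirely by showing directly that $[\widetilde{C}]=0$ in $H_2(W,\partial W;\Z)$: pick a Seifert surface $F\subset S^3$ for $K$, choose a $3$-chain $M\subset D^4$ with $\partial M=C-F$ (possible since $H_2(D^4)=0$), and lift to a $3$-chain $\widetilde{M}\subset W$ with $\partial\widetilde{M}=\widetilde{C}-\widetilde{F}$; since $\widetilde{F}\subset\partial W$, this makes $\widetilde{C}$ null-homologous rel boundary, and both terms vanish trivially. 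This sidesteps precisely the relative-setting technicalities you flag as the main obstacle. For the identification $\s_\xi=\s_0$, the paper argues that $\s_\xi$ is self-conjugate because the branched-double-cover construction is orientation-insensitive, then uses $H_1(\Sigma(K);\Z/2)=0$; your route via $c_1(J)=-\mathrm{PD}[\widetilde{C}]$ and $[\widetilde{K}]=0$ is a valid alternative, and in fact your observation $[\widetilde{K}]=0$ is just the boundary restriction of the paper's $[\widetilde{C}]=0$. Note incidentally that once you accept Hurwitz, you already get $c_1(J)[\widetilde{C}]+[\widetilde{C}]^2=0$ without the projection formula, since $c_1(J)[\widetilde{C}]=-[\widetilde{C}]^2$.
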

It is indeed conjectured by Grigsby that $\tau_{\s_0}(\widetilde{K})=\tau(K)$ for all alternating knots in $S^3$, and so far this was only confirmed for alternating torus knots $T_{2,2n+1}$ in \cite{ACS}, and by unpublished computer experiments performed by Celoria.  

\subsection*{Acknowledgements} {\smaller[1] We would like to thank Steven Boyer for some suggestions that led to the application to double-covers of quasi-alternating links, Daniele Celoria for letting us use some of his computer programs for numerical experiments, Ian Zemke for sending us some corrections, and the anonymous referee for the many useful suggestions and for pointing out a mistake in a previous version of Theorem 1.7. We thank Peter Feller for inspiring this work with a wonderful talk about the $A_n$-realisation problem, and Burak \"Ozba\u gc\i\:for carefully reviewing an earlier version of this manuscript.
Last but not least, we thank Irving Dai and Irena Matkovi\v c for generously sharing their expertise. The ideas for the paper were developed at CIRGET, based at the Universit\'e du Qu\'ebec \`a Montr\'eal (UQAM).}

\section{The different notions of tau-invariant} \label{section:two}
\subsection{Floer type chain complexes}We start with  some homological algebra.
\label{subsection:floer}
\begin{defin}
A Floer type chain complex $C^{-}$ is a finitely generated complex over the ring of polynomials $\F[U]$ such that:
\begin{itemize}
\item $C^{-}$ is free, that is 
\[\left( C^{-}=\bigoplus_{ \x \in \textbf B }  \ \F[U] \cdot \x    ,  \  \partial \x = \sum_{\y} c_{\x\y} \ U^{m_{\x\y}} \cdot  \y  \right)  \]
for some finite basis $\textbf B$, and  non-negative exponents $m_{\x\y}$.
\item the complex is graded so that multiplication by $U$ drops the grading by two,
\item if we set $C^{\infty}=C^{-}\otimes_{\F[U]} \F[U, U^{-1}]$ to be the localised chain complex then there is an isomorphism of graded modules
\[H_*(C^{\infty})\simeq  \F[U, U^{-1}] \otimes_{\F} \left(\F_{(-1)}\oplus \F_{(0)}\right)^{\otimes \ell-1}\simeq  \F[U,U^{-1}]^{\oplus 2^{\ell-1}} \:.\] This is the same as asking $H_*(C^-)$ to have rank $2^{\ell-1}$ as an $\F[U]$-module.
 \end{itemize}
\end{defin}

Note that $\left(\F_{(-1)}\oplus \F_{(0)}\right)^{\otimes \ell-1}$ has $\binom{\ell}{k}$ generators of grading $\text{gr}=-k$ for every $0\leq k\leq \ell-1$. In particular, it has a unique generator $\e_{1-\ell}= (1, 0) \otimes \dots \otimes (1,0)$ in degree $\text{gr}=-(\ell-1)=1-\ell$, and a unique generator $\e_0=(0, 1) \otimes \dots \otimes (0, 1) $ in grading zero.

For a Floer type chain complex $C^{-}$ we define the correction term $d(C^{-})$ to be the maximum grading of a homogeneous cycle $z$ whose homology class is not $U$-torsion in $H_*(C^-)$. We will also consider the associated hat complex
\[\widehat{C}= C^{-}\otimes_{ \F[U]}\F[U]/U =C^-/U\]
 that is obtained from $C^{-}$ substituting in $U=0$, that is:
 \[\widehat{C}=\left( \bigoplus_{ \x \in \textbf B }  \ \F \cdot \x    ,  \  \partial \x = \sum_{\y} \widehat{c}_{\x\y} \cdot  \y  \right) \ ,  \]
 where $ \widehat{c}_{\x\y}=1$ if $c_{\x\y}=1$ and $m_{\x\y}=0$, and zero otherwise. We shall regard the hat complex $\widehat{C}$ as a chain complex over  the ground field $\F$, and its homology $H_*(\widehat{C})$  as a finite dimensional graded vector space.

\label{subsection:theta}
Suppose that $C^-$ is a Floer type chain complex with correction term $d=d(C^-)$. Define $ C^+= C^\infty/UC^-$ and consider the short exact sequence
\begin{center}
\begin{tikzcd}
 0 \arrow[r] & C^- \arrow[r, "i\:\circ\:U"] & C^\infty \arrow[r, "\pi"] &  C^+ \arrow[r] & 0 \:.
\end{tikzcd} 
\end{center} 
This induces a long exact sequence 
\begin{center}
\begin{tikzcd}
 \dots \arrow[r] & H_*( C^-) \arrow[r, "i_*\circ\:U"] & H_*(C^\infty) \arrow[r, "\pi_*"] &  H_*(C^+) \arrow[r] & \dots 
\end{tikzcd} 
\end{center} 
from where one concludes that $H_*(C^+)$ contains a copy of \[\F[U, U^{-1}]/U \F[U] \otimes_{\F} \left(\F_{(-1)}\oplus \F_{(0)}\right)^{\otimes \ell-1}\] as a direct summand. Note that $\Xi^+:=1\otimes \e_{1-\ell}$ and $\Theta^+:=1\otimes \e_0$ give two preferential elements of $H_*(C^+)$ in grading $d+1-\ell$ and $d$ respectively. 
We denote by $H^+_{\Xi}$ and $H^+_{\Theta} \subset H_*(C^+)$ the $1$-dimensional $\F$-vector subspaces spanned by $\Xi^+$ and $\Theta^+$.

Multiplication by $U$ induces a short exact sequence 
\begin{center}
\begin{tikzcd}
 0 \arrow[r] & \widehat C \arrow[r, "\rho"] & C^+ \arrow[r, "U"] &  C^+ \arrow[r] & 0 
\end{tikzcd} 
\end{center} 
that gives in return a long exact sequence
\begin{center}
\begin{tikzcd}
 \dots \arrow[r,"\delta"] & H_*(\widehat C) \arrow[r, "\rho_*"] & H_*(C^+) \arrow[r, "U"] &  H_*(C^+) \arrow[r,"\delta"] & \dots \:.
\end{tikzcd} 
\end{center} 
Consequently, one has that $\rho_*(H_*(\widehat C))=\Ker U$, and since $U\cdot\Xi^+=U \cdot\Theta^+=0$ we can find some (non-canonical) homology classes in $H_*(\widehat C)$ which are mapped to $\Xi^+$ and $\Theta^+$. We can characterise those classes by studying the long exact sequence
\begin{center}
\begin{tikzcd}
 \dots \arrow[r,"\delta'"] & H_*(C^-) \arrow[r, "U"] & H_*(C^-) \arrow[r, "\psi_*"] & H_*(\widehat C) \arrow[r,"\delta'"] & \dots \:,
\end{tikzcd} 
\end{center} 
which allows us to prove the following lemma which is well known among Heegaard Floer homology experts.
\begin{lemma}
 \label{lemma:newer}
 Let us consider a homology class $\alpha\in H_*(\widehat C)$ of degree $d$ (resp. $d+1-\ell$). Then
 \begin{itemize}
     \item $\rho_*(\alpha)\subset H^+_\Theta$ (resp. $\rho_*(\alpha)\subset H^+_{\Xi}$) if and only if $\alpha\in\Imm \psi_*$;
     \item $\rho_*(\alpha)=\Theta^+$ (resp. $\rho_*(\alpha)=\Xi^+$) if and only if $\alpha=\psi_*(\delta)$ and $\delta$ is a non-torsion generator of the corresponding $\F[U]$-tower of $H_*(C^-)$;
     \item $\alpha\notin\Imm \psi_*$ if and only if $\rho_*(\alpha)\notin U^n\cdot H_*(C^+)$ for some $n\geq1$.
 \end{itemize}
\end{lemma}
\begin{proof}
 Combining the previous long exact sequences yields the following commutative diagram.
 \[\begin{tikzcd}
   H_*(C^-) \arrow[r, "i_*"] \arrow[d, "\psi_*"] & H_*(C^\infty) \arrow[d, "\pi_*"] \\
   H_*(\widehat C) \arrow[r, "\rho_*"] & H_*(C^+) 
 \end{tikzcd}\]
 The second item is proved in \cite[Lemma 2.3]{Raoux}. Denote by $\mathcal K$ the subspace of $H_*(C^-)$ generated by $U\cdot H_*(C^-)$ and $\text{Tor }H_*(C^-)$, we start by showing that $\alpha\in\Ker \rho_*$ if and only if there is a class $\beta\in\mathcal K$ such that $\psi_*(\beta)=\alpha$. If $\beta\in\mathcal K$, and $\psi_*(\beta)=\alpha$, then $0=\pi_*(i_*(\beta))=\rho_*(\psi_*(\beta))=\rho_*(\alpha)$. Conversely, if $\rho_*(\alpha)=0$ then there is a $c\in C^\infty$ such that $\partial c=a+Ub$, where $[a]=\alpha$ in $H_*(\widehat C)$ and $b\in C^-$; then $\partial(a+Ub)=0$ which means $a+Ub$ is a cycle in $C^-$ and $\psi_*[a+Ub]=\alpha$. The class $\beta=[a+Ub]$ is in $\mathcal K$ because otherwise $\pi_*(i_*(\beta))\neq0$.

 We continue by proving that if $\alpha\notin\Imm \psi_*$ then $\rho_*(\alpha)\notin U^n\cdot H_*(C^+)$ for some $n\geq1$, where we immediately note that the latter condition is equivalent to say $\rho_*(\alpha)\notin\Imm \pi_*$. By contradiction, we would find a $\beta\in H_*(C^-)$ such that $\rho_*(\alpha)=\pi_*(i_*(\beta))=\rho_*(\psi_*(\beta))$, and then $\psi_*(\beta)+\alpha$ belongs to $\Ker \rho_*$. For what we said in the previous paragraph there exists another class $\gamma\in H_*(C^-)$ such that $\psi_*(\beta+\gamma)=\alpha$, which goes against the initial assumption.
 This completes the proof: the first item and the other half of the third one follow easily by the previous discussion. 
\end{proof}
In what follows for a Floer type chain complex $C^-$ we denote $D^-:=(C^+)^\bullet$ and $D^+:=(C^-)^\bullet$ the complexes dual to $C^+$ and $C^-$ with respect to the basis $\textbf B$.
By taking the dual of the short exact sequences we previously stated we can then define $\widehat{D}:=\widehat C^\bullet$ and the maps $\rho^*:=\psi_*^\bullet:H_*(\widehat{D})\rightarrow H_*(D^+)$ and $\psi^*:=\rho_*^\bullet:H_*(D^-)\rightarrow H_*(\widehat{D})$. 

We have isomorphisms $H_i(\widehat D)^\bullet\cong H_{-i+1-\ell}(\widehat C)$ and $H_i(D^-)^\bullet\cong H_{-i+1-\ell}(C^+)$. Under the identification of a finitely generated vector space with its bidual, we have the canonical duality bilinear form \[\langle\cdot\:,\:\cdot\rangle:H_i(\widehat C)\otimes H_{-i+1-\ell}(\widehat D)\longrightarrow\F\:.\]
Therefore, a subspace $V\subset H_*(\widehat C)$ is canonically identified with $W^\perp$ for some subspace $W\subset H_*(\widehat D)$ and we just write $V=W^\perp$.

\subsection{Alexander type filtrations}
We will be interested in some specific type of filtrations on Floer type chain complexes.
\begin{defin} Let $C^{-}$ be a Floer type chain complex. An Alexander type filtration on $C^{-}$ is a filtration $\mathcal{F}_{*}: \dots \subset  \mathcal{F}_{n-1} \subset \mathcal{F}_{n} \subset \dots$ such that:
 \begin{itemize}
 \item the differential preserves the filtration, that is $\partial(\mathcal{F}_{n} ) \subset \mathcal{F}_{n}$ for all $n\in \Z$;
 \item multiplication by $U$ drops the filtration level by one, that is $U \cdot \mathcal{F}_{n}\subset \mathcal{F}_{n-1}$.
\end{itemize}  
 A filtered Floer type chain complex is a pair $(C^{-}, \mathcal{F})$ for some Alexander type filtration $\mathcal{F}_*$.
\end{defin} 

An Alexander type filtration $\mathcal{F}_*$ induces a filtration on the associated hat complex $\widehat{C}$ by taking the projection of $\mathcal F_*$ through $\psi:C^-\rightarrow\widehat C$, and on the dual hat complex $\widehat D$ by taking $(\mathcal F_{-*-1})^\perp$.
One can fix a non-zero class $\alpha \in H_*(\widehat{C})$ and define the $\tau_\alpha$-invariant of an Alexander type filtration $\mathcal{F}_*$ as
\[\tau_\alpha(\mathcal{F})= \min \big\{ \ m \in \Z \ : \ \alpha\in H_*(\mathcal{F}_{m})\big\} \ .\]
Similarly, we define the invariant $\tau_\text{top}(C^{-}, \mathcal{F})$ of the filtration $\mathcal{F}_*$ to be the minimum filtration level containing a cycle which represents a class in the preimage of $\Theta^+$:
\[\tau_\text{top}(C^{-}, \mathcal{F})= \min \big\{ \ m \in \Z \ : \ H_*(\mathcal{F}_{m})\text{ contains an } \alpha \text{ such that }\rho_*(\alpha)=\Theta^+\big\}\:,\]
and the invariant $\tau_\text{bot}(C^{-}, \mathcal{F}_*)$ as follows:
\[\tau_\text{bot}(C^{-}, \mathcal{F})= \min \big\{ \ m \in \Z \ : \ H_*(\mathcal{F}_{m})\text{ contains a } \beta \text{ such that }\rho_*(\beta)=\Xi^+ \big\}\:.\]
We show some generalisations of the symmetries under orientation reversing in \cite{OSlinks,Raoux}.
\begin{lemma}
 \label{lemma:mirror}
 Denote by $D^-$ the dual of $C^+$ and by $-\mathcal F$ the filtration on $\widehat D=\widehat C^\bullet$. We have that \[\tau_\text{bot}(D^-,-\mathcal{F})=-\tau_\text{top}(C^{-},\mathcal F)\:.\] Furthermore, if $\alpha\in H_*(\widehat C)$ is a non-zero class then \[\tau_\alpha(\mathcal F)=-\min\big\{\tau_\gamma(-\mathcal F)\: : \: \gamma\in H_{-*+1-\ell}(\widehat D)\text{ and }\gamma\notin\alpha^\perp\big\}\:.\] 
\end{lemma}
\begin{proof} 
 Let us write $-\mathcal F_n:=\mathcal F_{-n-1}^\perp$ for the $n$-th level of the filtration on $\widehat D$. We adapt the proof from \cite[Proposition 3.10]{Raoux}; in there the following equality is shown in the knot case, but it holds verbatim for links: 
 \begin{equation}
  H_{-*+1-\ell}(-\mathcal F_{-n-1})=H_{-*+1-\ell}(\mathcal F_n^\perp)=H_*(\mathcal F_n)^\perp\:.
  \label{eq:dual}
 \end{equation} 
 Suppose that $m<\tau_\text{top}(C^{-}, \mathcal{F})$. By definition we have that 
 \begin{equation}
  H_d(\mathcal F_m)\cap \rho_d^{-1}(\Theta^+)=\emptyset\:;
  \label{eq:empty}
 \end{equation}
 moreover, since $\rho_*^{-1}(H^+_{\Theta})=\Imm \psi_*$ by the first item of Lemma \ref{lemma:newer}, one has 
 \begin{equation}
  \rho_*^{-1}(\Theta^+)^\perp=\rho_*^{-1}(H^+_{\Theta})^\perp=(\Imm \psi_*)^\perp=\Ker \psi_*^\bullet=\Ker \rho^{-*+1-\ell}\:.
  \label{eq:theta}
\end{equation}
 Combining Equation \eqref{eq:dual} with the dual of Equation \eqref{eq:empty} yields \[\text{Span}(H_{-d+1-\ell}(-\mathcal F_{-m-1}),\Ker \rho^{-d+1-\ell})=H_{-d+1-\ell}(\widehat D)\:,\] and then a given $\gamma$ such that $\rho^*(\gamma)=\Xi^+$ can be written as $\gamma=\alpha+\alpha_0$, where $\alpha_0\in\Ker \rho^*$ and $\alpha\in H_{-d+1-\ell}(-\mathcal F_{-m-1})$, and consequently $\rho^{-d+1-\ell}(\alpha)=\Xi^+$. If we set $n=-m-1$ then such an $\alpha$ exists for every $n\geq-\tau_\text{top}(C^{-}, \mathcal{F})$, which implies $\tau_\text{bot}(D^{-}, -\mathcal{F})\leq-\tau_\text{top}(C^{-}, \mathcal{F})$.

 Now suppose that $\tau_\text{top}(C^{-}, \mathcal{F})\leq m$. We now have that there is an $\alpha$ such that $\Theta^+=\rho_d(\alpha)$ inside $H_d(\mathcal F_m)$, which means $H_{-d+1-\ell}(-\mathcal F_{-m-1})\subset\alpha^\perp$; therefore, for every $n=-m-1<-\tau_\text{top}(C^{-}, \mathcal{F})$ each class $\gamma$ in $H_{-d+1-\ell}(-\mathcal F_{n})$ satisfies $\langle\alpha,\gamma\rangle=0$. If $\rho^*(\gamma)=\Xi^+$ then $\gamma\in \Ker \rho_*^\perp$ from the dual of Equation \eqref{eq:theta}, and then by the first two items of Lemma \ref{lemma:newer} one has
 \[\gamma\in\text{Span}(\alpha,\Ker \rho_*)^\perp=\rho_*^{-1}(H^+_{\Theta})^\perp=\Ker \rho^*\] which is a contradiction. This implies that no $\gamma$ in $H_{-d+1-\ell}(-\mathcal F_{n})$ is mapped to $\Xi^+$ and then $-\tau_\text{top}(C^{-}, \mathcal{F})\leq\tau_\text{bot}(D^{-}, -\mathcal{F})$.
 
 For the second equality, assume that $m<\tau_\alpha(\mathcal F)$. By definition we have that $\alpha\notin H_*(\mathcal F_m)$, which means $H_{-*+1-\ell}(-\mathcal F_{-m-1})\not\subset\alpha^\perp$ by duality; hence, there exists a class $\gamma\in H_{-*+1-\ell}(-\mathcal F_{-m-1})$ such that $\gamma\notin\alpha^\perp$. If we set $n=-m-1$ then the latter condition holds for every $n\geq-\tau_\alpha(\mathcal F)$, which implies $\tau_\gamma(-\mathcal F)\leq-\tau_\alpha(\mathcal F)$.

 Now we take $\tau_\alpha(\mathcal F)\leq m$. We now have that $\alpha\in H_*(\mathcal F_m)$, which means $H_{-*+1-\ell}(-\mathcal F_{-m-1}) \\\subset\alpha^\perp$ again by duality; therefore, each class in $H_{-*+1-\ell}(-\mathcal F_{n})$ is contained in $\alpha^\perp$ for every $n=-m-1<-\tau_\alpha(\mathcal F)$, which implies $-\tau_\alpha(\mathcal F)\leq\tau_\gamma(-\mathcal F)$ for every $\gamma\notin\alpha^\perp$.
\end{proof}

\subsection{Heegaard Floer homology and contact invariants} 
Heegaard Floer homology associates to a $\Spin^c$-rational homology sphere $(Y, \mathfrak{s})$, represented by a multi-pointed Heegaard diagram, a Floer type chain complex $CF^-(Y, \mathfrak{s};\ell)$ with
\[HF^\infty(Y, \s;\ell)= H_*(CF^-(Y, \mathfrak{s};\ell)\otimes_{\F[U]} \F[U,U^{-1}])\simeq\F[U, U^{-1}] \otimes_{\F} \left(\F_{(-1)}\oplus \F_{(0)}\right)^{\otimes \ell-1}  \]
where $\ell$ denotes the number of basepoints. 
To avoid confusion with the Ozsv\'ath-Szab\'o notation, we write $\widehat{HF}(Y,\s)$ only when the group is constructed using a Heegaard diagram of $Y$ having a unique basepoint. When there are $\ell$ basepoints, we non-canonically write
\[\widehat{HF}(Y,\s;\ell) \simeq \widehat{HF}(Y,\s)\otimes_{\F} \left(\F_{(-1)}\oplus \F_{(0)}\right)^{\otimes \ell-1}\:.\]
If $L$ is a link in $Y$, represented with an $\ell$-pointed Heegaard diagram by the additional choice of extra $\ell$ basepoints, then the chain group $CF^-(Y, \mathfrak{s};\ell)$ can be equipped with an Alexander type filtration $\mathcal{F}^L$ \cite{OSlinks}. This gives rise to the invariants
\[\tau(Y,L,\mathfrak{s})=\tau_\text{top}(CF^-(Y, \mathfrak{s};\ell),\mathcal{F}^L)\hspace{1cm}\text{ and }\hspace{1cm}\tau^*(Y,L,\mathfrak{s})=\tau_\text{bot}(CF^-(Y, \mathfrak{s};\ell),\mathcal{F}^L)\] 
associated to the $\Spin^c$-structure $\mathfrak{s}$. Alternatively, one can choose a non-zero class $\alpha \in \widehat{HF}_*(Y, \mathfrak{s};\ell)$ and look at the invariant \[\tau_\alpha(L) = \tau_{\alpha}(\mathcal{F}^L)\:.\]
Of course in some cases the two invariants are related to each other.
\begin{remark}
 If $Y$ is an $L$-space then $\widehat{HF}(Y, \mathfrak{s})$ is one dimensional, and we can call $\theta$ and $\theta^*$ the only non-zero classes in $\widehat{HF}(Y,\s;\ell)$ of degree $d$ and $d+1-\ell$ respectively. Therefore, one has \[\tau_\theta(L) =\tau(Y,L,\s)\hspace{1cm}\text{ and }\hspace{1cm}\tau_{\theta^*}(L) =\tau^*(Y,L,\s)\:.\]
\end{remark}
In \cite{GRS,Raoux} the invariant $\tau$ is studied in the context of knots sitting on the boundary $(Y,\s)$ of a rational homology four-ball $X$. The authors prove the following:
\begin{enumerate}
 \item if $F\hookrightarrow X$ is a compact, connected and oriented surface with $\partial F=K$, and $\s$ extends to $X$, then $|\tau(Y,K,\s)|\leq g(F)$;
 \item $\tau(-Y,K,\s)=-\tau(Y,K,\s)$;
 \item if $K_i$ are knots in $(Y_i,\s_i)=\partial(X_i,\mathfrak u_i)$ such that $\mathfrak u_i\lvert_{Y_i}=\s_i$ for $i=1,2$ then \[\tau(Y_1\#Y_2,K_1\#K_2,\s_1\#\s_2)=\tau(Y_1,K_1,\s_1)+\tau(Y_2,K_2,\s_2)\:.\]
\end{enumerate}
In Section \ref{section:six} we prove, with the due modifications, a version of Property 1 in the case of links, while Lemma \ref{lemma:mirror} provides the one for Property 2. 

If $(M,\xi)$ is a contact three-manifold, then the hyperplane distribution $\xi$ has an associated $\Spin^c$-structure $\mathfrak{s}_\xi$, and a homogeneous cycle $x(\xi) \in \widehat{CF}(-M,\mathfrak{s}_\xi)$, whose homology class $\widehat c(\xi)$ is called the contact invariant of $(M,\xi)$ \cite{OSz-contact}. In \cite{Cavallo-L} it is explained how to define the invariant $\widehat c(\xi)$ when the Heegaard diagram of $-M$ is multi-pointed; in particular, from \cite[Lemma 5.2]{Cavallo-L} and \cite[Theorem 2.8]{Cavallo-upsilon} it follows that 
its Maslov grading is equal to $-d_3(\xi)+1-\ell$, where
$d_3(\xi)$ denotes the Hopf invariant of the plane field $\xi$. 
When $\widehat c(\xi)$ is non-vanishing we can define a link invariant by setting $\tau_\xi(L):=-\tau_{\widehat c(\xi)}(L)$: this is the invariant studied by Hedden \cite{Hedden}. We show that if $(M,\xi)$ also admits a Stein filling then $\widehat c(\xi)$ satisfies a stronger property. 
\begin{teo}\label{teo:supported}
 If a contact three-manifold $(M,\xi)$ admits a strong symplectic filling 
 then $\widehat c(\xi)\notin\theta^\perp$, where $\theta=\widehat F_{W,J}(\textbf e_0)$ is the image of the cobordism map induced by $W$.    
\end{teo}
\begin{proof} 
 We consider the map \[\widehat F_{\overline W,J}:\widehat{HF}_{-d_3(\xi)+1-\ell}(-M, \s_\xi;\ell)\longrightarrow\widehat{HF}_{1-\ell}(S^3;\ell)\] where $\widehat F_{\overline W,J}$
 is the cobordism map induced by the strong symplectic filling $(W,\omega)$ turned upside-down, and $J$ is the $\Spin^c$-structure induced by $\omega$.
 The gradings satisfy $\deg(\widehat F_{W,J})=\deg(\widehat F_{\overline W,J})=d_3(\xi)$.
 Of course, being the contact class defined by an intersection point, it is homogeneous.   

 Indeed, we can conclude that the contact class $ \widehat c(\xi) $ cannot be in the kernel of 
 $\widehat F_{\overline W,J}$ since $\widehat{F}_{\overline W,J}( \widehat c(\xi) )=\textbf e_{1-\ell}$ from \cite[Remark 2.14]{Ghiggini2}. 
 It is then enough to show that $\Ker \widehat F_{\overline W,J}=\theta^\perp$. 
 Since $\widehat F_{\overline W,J}$ is the dual of $\widehat F_{W,J}$ this follows from $\Ker \widehat F_{\overline W,J}=(\Imm \widehat F_{W,J})^\perp=\theta^\perp$.    
\end{proof}  
This obviously holds when $c^+(\xi):=\rho_*(\widehat c(\xi))$ coincides with $\Xi^+$ in the group $HF^+(-M,\s_\xi)$. This happens for example
when $(M,\xi)$ is supported by a planar open book \cite{OSSz}, or when it is the boundary of a rationally convex domain in $\C^2$ \cite{MT}. We immediately have the following corollary.  
\begin{cor}
 \label{cor:supported}
 Suppose that $(M,\xi)$ has a strong symplectic filling, and that $L\subset M$ is an $\ell$-component link, then $\tau_\xi(L)\leq\tau_\theta(L)$ where $\theta\in\widehat{HF}(Y,\s;\ell)$ is the image of $\widehat F_{W,J}$.
\end{cor}
\begin{proof}
 In Theorem \ref{teo:supported} we have proved that $\widehat c(\xi)$ is not in $\Ker\widehat F_{\overline W,J}=\theta^\perp$, and using Lemma \ref{lemma:mirror} we obtain $-\tau_\theta(L)=\min\big\{\tau_\gamma(L)\: : \: \gamma\in\widehat{HF}_{-d_3(\xi)+1-\ell}(-M,\s_\xi;\ell)\text{ and }\gamma\notin\theta^\perp\big\}\leq\tau_{\widehat c(\xi)}(L)$. We can now reverse the inequality and write $\tau_\xi(L)=-\tau_{\widehat c(\xi)}(L)\leq\tau_\theta(L)$. 
\end{proof}

\section{Quasi-positive links in Stein fillable three-manifolds}\label{section:three}
In what follows $Y$ denotes a closed three-manifold, and $(B\subset Y,\pi:Y\setminus B \to S^1)$ an open book for $Y$. For basic definitions and results regarding open books 
we refer to \cite{OS2}.

\begin{defin}[Braids in an open book]
A link $L$ in $Y$ is braided around an open book $(B, \pi)$ of a three-manifold $Y$ if $L$ intersects transversely every page $F_\theta= \pi^{-1}(\theta)$ of the open book in a fixed number $n>0$ of distinct points. 
\end{defin}

\noindent Note that a link braided around an open book is naturally transverse to the contact structure supported by the given open book. We shall describe braids through (abstract) pointed open books as in \cite{Hayden}. 	

\subsection{Pointed open books}
Let $(S,\phi)$ be an abstract open book for $(B,\pi)$. In what follows we fix a set of marked points $P=\{p_1,...,p_n\}$ in the interior of $S$, and we assume that the monodromy $\phi:S\to S$ preserves \emph{point-wise} both the boundary $\partial S$, and the marked set $P$. 

We denote by $B_n(S,P)$ the set of all diffeomorphisms $S\to S$ that preserve $P$ \emph{set-wise}, $\partial S$ point-wise, and are isotopic to the identity. Note that for any choice of $\sigma \in B_n(S,P)$ the pair $(S, \sigma\circ \phi)$ defines the same open book $(B,\pi)$ as $(S,\phi)$. However, the choice of $\sigma$ gives rise to a transverse link $T$ in $Y$ braided around $(B,\pi)$. This is defined by the suspension $T=P\times I/\beta$ of the marked set $P$, where $\beta=\sigma\circ \phi$.  We call the four-tuple $(S, P, \phi, \sigma)$ a \emph{pointed open book}. We call  $\beta= \sigma \circ \phi$ the braid encoded by the pointed open book $(S, P, \phi, \sigma)$, and $\hat{\beta}=T$ the closure of $\beta$.

We know \cite{Geiges,Pavelescu} that any transverse link $L$ braided around an open book $(B,\pi)$ is encoded by a pointed open book $(S, P, \phi, \sigma)$. Furthermore, any transverse link in the contact three-manifold supported by $(B,\pi)$ is isotopic to a braid as in the previous paragraph. There is also a generalisation of Markov's theorem \cite[Theorem 2.3]{Hayden}: two braids in a contact three-manifold represent the same transverse link if and only if they differ by conjugation of the pointed open book, positive stabilisation of the braid, and positive Hopf plumbing of the open book. 

\begin{defin}[Quasi-positive braids in arbitrary contact manifolds]
A braid in an open book is quasi-positive if it can be encoded by an abstract pointed open book whose pointed monodromy is isotopic to a product of positive half-twists and arbitrary Dehn twists. 
\end{defin}

In what follows we will study quasi-positive braids in open books associated to Stein fillable structures. These manifest themselves as the boundary of pseudo-holomorphic curves embedded in the fillings.

\subsection{Holomorphic curves in Stein fillings}
It is known that every Stein fillable contact three-manifold $(M,\xi)$ admits a \emph{positive allowable open book}, i.e. an abstract open book $(S,\phi)$ where the monodromy $\phi$ factors as a product of positive Dehn twists \cite{Giroux}. The following was recently proved in \cite{Hayden}.

\begin{teo}[Hayden]
 \label{teo:Hayden}
 The boundary of any properly embedded pseudo-holomorphic curve in a Stein domain $(W,J)$ is transversely isotopic to a quasi-positive braid, encoded by a pointed and positive allowable open book.   
\end{teo}

Hayden's proof is based on the following observations (we review them because they are relevant to our discussion, and guide the ideas in our proof).
Given a pseudo-holomorphic curve $C$ in a Stein domain $(W,J)$, and a smooth convex hyper-surface $M\subset W$, consider the intersection $C\cap M$. If this intersection is transverse, then $C\cap M$ is a transverse link in $(M,\xi)$, where $\xi$ is the contact structure induced naturally on $M$ by the Stein structure. Indeed, since $TC$ and $\xi$ consist of complex lines in $TW$, their intersection at any point has real dimension zero or two. It follows that $C\cap M$ is transverse to the contact structure except
for those points $p$ at which $C$ is tangent to $M$, where $T_pC=\xi_p\subset T_pM$.

Denote by $\rho:W\rightarrow[0,\infty)$ a $J$-convex function on $(W,J)$.
We say that a smooth, oriented surface $F\subset W$ is \emph{ascending} if $F$ contains no critical points of $\rho$, the restriction $\rho\lvert_F$ is a Morse function, and, except
at its critical points, each level set $(\rho\lvert_F)^{-1}(c)$ is positively transverse to the contact structure on $\rho^{-1}(c)$.

We know from \cite[Proposition 4.10]{Hayden} that every critical point $p$ of an ascending surface $F$ is such that $T_pF$ is a complex line in $T_pW$.
Every such critical point can then be described as positive or negative according to whether the intrinsic orientation on $T_pF$ agrees or not, respectively, with the complex orientation. In general, we can further classify these points as elliptic or hyperbolic; see \cite[Definition 8.3.3]{OS2} for the general definition.

It is then stated in \cite[Proposition 4.11]{Hayden} that, with respect to a generic $J$-convex function $\rho$, a pseudo-holomorphic curve in a Stein surface $(W,J)$ is ascending with only positive critical points. Such a surface is called \emph{positive ascending surface}. In his paper Hayden proves Theorem \ref{teo:Hayden} above as a special case of \cite[Theorem 4.2]{Hayden}, where it is shown that the boundary of any positive ascending surface, properly embedded in a Stein domain $W$, is transversely isotopic to a quasi-positive braid encoded by a (positive allowable) pointed open book for $\partial W$.

\subsection{Positive Bennequin surfaces}
\label{subsection:Bennequin}
\begin{figure}[t]
\vspace{0.6cm}
 \centering
 \def\svgwidth{13cm}
\begingroup%
  \makeatletter%
  \providecommand\color[2][]{%
    \errmessage{(Inkscape) Color is used for the text in Inkscape, but the package 'color.sty' is not loaded}%
    \renewcommand\color[2][]{}%
  }%
  \providecommand\transparent[1]{%
    \errmessage{(Inkscape) Transparency is used (non-zero) for the text in Inkscape, but the package 'transparent.sty' is not loaded}%
    \renewcommand\transparent[1]{}%
  }%
  \providecommand\rotatebox[2]{#2}%
  \newcommand*\fsize{\dimexpr\f@size pt\relax}%
  \newcommand*\lineheight[1]{\fontsize{\fsize}{#1\fsize}\selectfont}%
  \ifx\svgwidth\undefined%
    \setlength{\unitlength}{579.85716248bp}%
    \ifx\svgscale\undefined%
      \relax%
    \else%
      \setlength{\unitlength}{\unitlength * \real{\svgscale}}%
    \fi%
  \else%
    \setlength{\unitlength}{\svgwidth}%
  \fi%
  \global\let\svgwidth\undefined%
  \global\let\svgscale\undefined%
  \makeatother%
  \begin{picture}(1,0.33333333)%
    \lineheight{1}%
    \setlength\tabcolsep{0pt}%
    \put(0,0){\includegraphics[width=\unitlength,page=1]{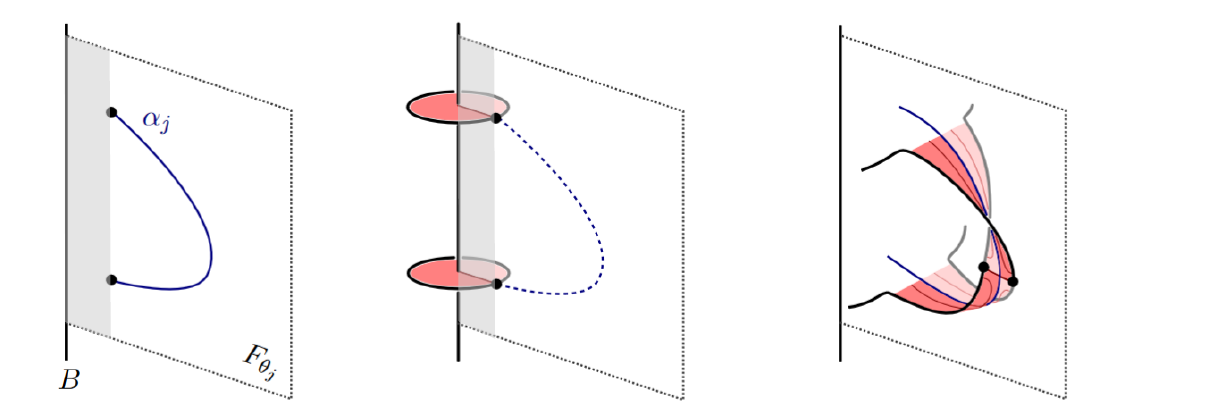}}%
    \put(0.06434867,0.31134127){\color[rgb]{0,0,0}\makebox(0,0)[lt]{\lineheight{1.25}\smash{\begin{tabular}[t]{l}$\Omega$\end{tabular}}}}%
    \put(0.38820711,0.31316534){\color[rgb]{0,0,0}\makebox(0,0)[lt]{\lineheight{1.25}\smash{\begin{tabular}[t]{l}$\Omega$\end{tabular}}}}%
  \end{picture}%
\endgroup%

 \vspace{0.3cm}       
 \caption{\smaller[1]{Attaching positively twisting bands in a Bennequin surface. The picture is taken from \cite[Figure 5]{Hayden}.}}
 \label{Bennequin}
\end{figure}

Suppose that a transverse link $T$ can be expressed as the closure of a  quasi-positive braid  in a contact three-manifold $(M,\xi)$. This means that we can find a pointed open book $(S,P,\sigma,\phi)$ such that 
\[\sigma=H_{\alpha_1}\circ\cdots\circ H_{\alpha_m}\ , \] 
where $H_{\alpha_1}, \dots, H_{\alpha_m}$ denote positive half-twists along embedded arcs $\alpha_1, \dots, \alpha_m \subset S$ with endpoints in $P$. For the following discussion we fix a collar neighbourhood $\Omega$ of $\partial S$, and we draw the marked points $P=\{p_1, \dots, p_k\}$ on the interior boundary $\partial (\Omega \setminus \partial S)$ of said collar neighbourhood. Under the additional assumption that: 
\begin{center}
(*) \emph{  the open book monodromy $\phi$ fixes point-wise the collar neighbourhood $\Omega$ of $\partial S$ }
\end{center}
one can construct an immersed surface $\mathcal{B}_T \subset M$ with ribbon singularities bounding $T$. This is a so called \emph{Bennequin surface} we learned about from \cite{Hayden}. It is constructed via the following algorithm: 
\begin{itemize}
\item\emph{Input:} pointed open book $(S,P,\sigma,\phi)$ satisfying assumption (*) above, and a factorisation $\sigma=H_{\alpha_1}\circ\cdots\circ H_{\alpha_m}$ in positive half-twists.
\item\emph{Step 2:} Fix a set of disjoint embedded arcs $c_1, \dots, c_n \subset \Omega$ joining the marked points $p_1, \dots, p_n$  to some points $q_1, \dots , q_n \in \partial S$. Let $\mathcal B_0\subset M$ be the disjoint union of disks swept out by $c_1, \dots, c_n$ in the open book, that is $\mathcal{B}_0= \bigcup_{i=1}^n c_i\times I/\sim$. 
\item\emph{Step 3:} Attach bands $h_j$ to $\mathcal B_0$ as follows: the core of $h_j$ is a copy of the arc $\alpha_j$, lying in the
page $S_{\theta_j}$ for $\theta_j=\frac{2\pi j}{m+1}$. We can extend this core to a positively twisted band $h_j$ as suggested by Figure \ref{Bennequin}.
\item\emph{Output:} the immersed Bennequin surface $\mathcal{B}_T= \mathcal{B}_0\cup h_1\cup \dots \cup h_m$. 
\end{itemize}
In general, the twisted bands $h_j$ may intersect the interiors of the disks $\mathcal B_0$ transversely along embedded ribbon arcs. This happens exactly when in the abstract open book the arc $\alpha_j$ enters the collar neighbourhood $\Omega$, for some $j$.

In the case when there are no ribbon singularities, that is, none of the arcs $\alpha_j$ of the half Dehn twists passes through the collar neighbourhood $\Omega$, the braid $\beta$ is called \emph{strongly quasi-positive}. For strongly quasi-positive links some of the claims of this paper are easier to prove, the complication arise when: 1) the Bennequin surface has ribbon singularities, and 2) the braid monodromy does not satisfy assumption (*). To overcome the latter we use some auxiliary open books.

\subsection{Auxiliary open books} Given an abstract pointed open book $(S, P, \sigma, \phi)$ representing a braid $T$,  we can form an auxiliary braid $T_0$ in 
$\#^NS^1\times S^2$, where $N=b_1(S)=2g(S)+|\partial S|-1$. This is encoded by the pointed open book 
$(S,P, \sigma, \Id)$. 
The transverse link $T_0$ 
has an associated Bennequin surface since its monodromy can be arranged to satisfy (*) while in principle $T$ itself may not have a Bennequin surface. For example that cannot happen if $T$ is homologically essential.

\begin{remark}
If $S$ in $Y$ is an immersed surface arising as the image of some map $F\to Y$ we denote by $\chi(S)$ the embedded Euler characteristic of $S$ and by $\hat{\chi}(S)=\chi(F)$ the Euler characteristic of $S$ as an abstract surface.  
\end{remark}

The following lemma will be needed later in the paper. 

\begin{lemma}\label{lemma:curve} Let $C$ be a properly embedded pseudo-holomorphic curve in a Stein domain $(W,J)$. Suppose that $(S, P, \sigma , \phi)$ is a positive allowable open book describing $(M,\xi,T)=\partial (W,J,C)$. Then there is a factorisation of $\sigma$ that gives rise to a Bennequin surface 
$\mathcal B_{0}\subset \#^NS^1\times S^2$, with boundary the closure of the braid $T_0$, 
such that $\chi(C)=
\hat{\chi}(\mathcal B_{0})$.
\end{lemma}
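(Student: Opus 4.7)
The plan is to use Hayden's Morse-theoretic description of holomorphic curves in Stein fillings (his Theorem 4.2, plus Proposition 4.11 which gives that $C$ is a positive ascending surface) to produce a factorization of $\sigma$, and then to count Euler characteristics on both sides as a one-handle count.

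First, I would invoke Proposition~4.11 of \cite{Hayden} to guarantee that, after a $C^\infty$-generic choice of the $J$-convex function $\rho$, the restriction $\rho|_C$ is a Morse function whose critical points are all positive and of type elliptic (index $0$) or hyperbolic (index $1$). The proof of Theorem~4.2 in \cite{Hayden} then associates to this Morse function a pointed open book whose braid factorization $\sigma = H_{\alpha_1}\circ\cdots\circ H_{\alpha_m}$ has one positive half-twist $H_{\alpha_j}$ for each hyperbolic critical point, and whose marked set $P$ has one point for each elliptic critical point. Writing $n = |P|$ and $m$ for the number of hyperbolic critical points, a standard Morse count gives $\chi(C) = n - m$. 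This is the factorization of $\sigma$ that we select.

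Next, I would feed this factorization into the Bennequin surface algorithm of Subsection~\ref{subsection:Bennequin} for the two auxiliary pointed open books $(S,P,\sigma,\phi_0)$ and $(S,P,\sigma,\Id)$. Condition $(*)$ holds trivially for the identity monodromy, and for $\phi_0$ after we choose the basis circles generating $H_1(S)$ to be disjoint from the marked set $P$ and the collar neighbourhood $\Omega$ of $\partial S$, which is always possible. In either case the algorithm outputs a surface $\mathcal{B}_0 \cup h_1 \cup \cdots \cup h_m$, where $\mathcal{B}_0$ is a disjoint union of $n$ swept-out disks and each band $h_j$ is a $1$-handle; as abstract surfaces, a disk contributes $+1$ and a band contributes $-1$, so $\hat{\chi}(\mathcal{B}_{T_0}) = \hat{\chi}(\mathcal{B}_{T'}) = n - m$. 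Combining with the previous paragraph yields the desired equality $\chi(C) = \hat{\chi}(\mathcal{B}_{T_0}) = \hat{\chi}(\mathcal{B}_{T'})$.

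The main subtlety I anticipate is matching up the open book data $(S,P,\sigma,\phi)$ given in the hypothesis with the one produced by Hayden's argument from the Morse function on $C$. A priori these need not coincide, but the transverse Markov-type theorem cited as \cite[Theorem~2.3]{Hayden} allows one to pass between them by conjugation, positive braid stabilizations, and positive Hopf plumbings of the open book, each of which adds the same number of elliptic and hyperbolic critical points to the Morse data on $C$ and correspondingly a disk together with a band to each Bennequin surface — leaving $\chi(C)$, $\hat{\chi}(\mathcal{B}_{T_0})$, and $\hat{\chi}(\mathcal{B}_{T'})$ all unchanged. Once this bookkeeping is settled, the equality reduces to the straightforward index count above.
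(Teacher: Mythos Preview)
Your approach is different from the paper's, and the Markov-move step hides a real difficulty. The paper does not compare two pointed open books through Markov moves. Instead it uses the given allowable factorisation $\phi=D_{\gamma_1}\circ\cdots\circ D_{\gamma_s}$ to split $W=\natural^NS^1\times B^3\cup Z$, invokes Hayden's Theorem~4.14 to split $C=\mathcal B\cup C'$ with $C'$ a union of annuli and $\mathcal B\subset\natural^NS^1\times B^3$ holomorphic with boundary $T'$, and then embeds $\natural^NS^1\times B^3\hookrightarrow B^4$ (by attaching $2$-handles along a basis of $H_1(S)$) so that $\mathcal B$ becomes a holomorphic curve in the four-ball with boundary $T_0$. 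Boileau--Orevkov then furnishes the quasi-positive factorisation of $\sigma$ whose Bennequin surface in $S^3$ is isotopic to this curve, giving $\hat\chi(\mathcal B_{T_0})=\chi(\mathcal B)=\chi(C)$ directly; the equality $\hat\chi(\mathcal B_{T'})=\hat\chi(\mathcal B_{T_0})$ is then the trivial observation that both surfaces use $|P|$ disks and the same number of bands.

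The gap in your argument is that the Markov theorem connects the two pointed open books by moves \emph{and their inverses}, and you have not explained how to transport a quasi-positive factorisation through a \emph{destabilisation}. If $\sigma=H_{\alpha_1}\cdots H_{\alpha_m}\in B_{n+1}(S,P\cup\{p\})$ destabilises to $\sigma'\in B_n(S,P)$, there is no obvious way to peel off a single half-twist and land on a length-$(m-1)$ positive factorisation of $\sigma'$; this would require a Hurwitz-type manoeuvre that is not automatic. In particular, your argument does not yet establish that the \emph{given} $\sigma$ admits any quasi-positive factorisation --- which is part of what the lemma asserts and what the Bennequin-surface algorithm needs as input. Your phrasing that each Markov move ``adds the same number of elliptic and hyperbolic critical points to the Morse data on $C$'' conflates moves on the braid side with modifications of the fixed curve $C$. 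The Morse count $\chi(C)=n-m$ and the disk/band count $\hat\chi=n-m$ are both fine; it is the bridge between Hayden's output and the prescribed $(S,P,\sigma,\phi)$ that needs a different mechanism, and the paper's embedding-into-$D^4$ plus Boileau--Orevkov is exactly that mechanism.
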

\begin{proof}
Starting from the monodromy factorisation $\phi=D_{\gamma_1}\circ \cdots \circ D_{\gamma_s}$ in positive Dehn twists, we obtain a handle decomposition of the Stein manifold $W$ as a union $W=\natural^NS^1\times D^3 \cup Z$, where $Z$ is a collection of $2$-handles that are attached with contact framing along the $\gamma_i$ in the open book of $\#^NS^1 \times S^2$ associated  to $(S, \text{Id})$.

Following the proof of \cite[Theorem 4.14]{Hayden} we decompose $C$ as a union $C=\mathcal B\cup C'$, where $C'\subset Z$ consists of disjoint annuli, and $\mathcal B$ is a holomorphic curve in the standard Stein filling $\natural^NS^1\times D^3$ of $\#^NS^1\times S^2$. Indeed, since the boundary of $\mathcal{B}$ is presented by $(S,P,\sigma,\text{Id})$, we can identify $\partial \mathcal{B}$ with the auxiliary quasi-positive braid $T_0$ in $\#^NS^1\times S^2$, which bounds the Bennequin surface $\mathcal B_{0}$. 

Summarising, for said factorisation of $\sigma$ we have: $\hat{\chi}(\mathcal{B}_{0})=\chi(\mathcal{B})=\chi(C)$.
Indeed, $\hat{\chi}=|P|-m$ where $m$ is the number of positive half Dehn twists in the factorisation of $\sigma$ we chose in the construction of the two Bennequin surfaces.
\end{proof}

\section{The slice-Bennequin inequality}\label{Section:four}
To facilitate the arguments exposed in this section we shall use an  alternative definition of Hedden's $\tau_\xi$-invariant.

Recall that if $Y$ is a rational homology three-sphere, and $L$ in $Y$ is an $\ell$-component link, then we have a relation between the numbers of the $\F[U]$ towers in the collapsed link Floer homology group $cHFL^-(Y,L,\s)$  and the rank of $\widehat{HF}(Y,\s;\ell)$ for a given $\Spin^c$-structure $\s\in \text{Spin}^c(Y)$. Namely, there is a map 
\begin{equation}
    cHFL^-(Y,L,\s)\longrightarrow \widehat{HF}(Y,\s;\ell)\cong \widehat{HF}(Y,\s)\otimes_\F(\F_{(-1)}\oplus\F_{(0)})^{\otimes \ell-1}\:,
    \label{eq:Map}
\end{equation} 
which collapses each tower of $cHFL^-(Y,L,\s)$ into a distinct element of $\widehat{HF}(Y,\s)\otimes(\F_{(-1)}\oplus\F_{(0)})^{\otimes \ell-1}$, and sends an element with bigrading $(\text{gr},A)$ into one with Maslov grading $\text{gr}-2A$, see  \cite[Section 2]{Cavallo-L} for the details. 

Every tower in $cHFL^-(Y,L,\s)$ has a generator with bigrading of the form \[(\text{gr}(\alpha)+2A_{\alpha\otimes\Xi}+\text{gr}(\Xi),\:A_{\alpha\otimes\Xi})\] for each non-zero $\alpha\in\widehat{HF}(Y,\s)$ and $\Xi\in(\F_{(-1)}\oplus\F_{(0)})^{\otimes \ell-1}$.
Thus we have a total of $|\widehat{HF}(Y,\s)|$ towers whose generators have exactly bigrading $(\text{gr}(\alpha)+2A_\alpha,\:A_\alpha)$, corresponding to the choice $\Xi=\mathbf e_0$. Similarly, there are exactly $|\widehat{HF}(Y,\s)|$ towers with generators of bigrading $(\text{gr}(\alpha)+2A_{\alpha\otimes\e_{1-\ell}}+1-\ell,\:A_{\alpha\otimes\e_{1-\ell}})$, corresponding to the choice $\Xi=\mathbf e_{1-\ell}$. 
Note that $\tau_{\alpha}(L)=-A_{\alpha}$ and $\tau_{\alpha\otimes\mathbf e_{1-\ell}}(L)=-A_{\alpha\otimes\mathbf e_{1-\ell}}$, see \cite[Theorem 1.3]{Cavallo-tau}.

For a link $L$ in $Y$ and $\Sigma\in H_2(X,Y;\Z)$ where $X$ is a four-manifold with $\partial X=Y$, we recall that $\chi_4^X(L,\Sigma)$ is the maximal Euler characteristic of a compact, oriented surface $F$, properly embedded in $X$, and such that $\partial F=L$ and $[F]=\Sigma$.  
We want to prove the following inequality which generalises results from \cite{LM,Hedden-+,Plamenevskaya,Cavallo-fibered}.
\begin{teo}[Slice-Bennequin inequality]
 \label{teo:sB}
 Suppose that $(W,\omega)$ is a symplectic filling of a rational homology sphere $M$, equipped with a contact structure $\xi$. If $T$ is a transverse $\ell$-component link in $(M,\xi)$, then \[\self(T)\leq2\tau_{\xi}(T)-\ell\leq-\chi_4^{W}(T,\Sigma)-\Sigma\cdot\Sigma+c_1(\omega)(\Sigma)\:,\]
 where $\Sigma\in H_2(W,M;\Z)$.   
\end{teo}
We split the proof of Theorem \ref{teo:sB} into two parts, corresponding to the two inequalities appearing in the statement.

We recall that if $T$ is a transverse link in a contact three-manifold $(M,\xi)$ then we can consider the transverse invariant $\mathfrak L(T)\in cHFL^-(-M,T,\s_\xi)$ defined in \cite{LOSSz,Cavallo-L}. Such an invariant is non-torsion if and only if $\widehat c(\xi)\neq0$ (\cite[Theorem 1.2]{LOSSz}) and, whenever the Alexander grading is defined, we have the following relation \cite{OS,Cavallo-L}:

\begin{equation}    \label{eq:Grading}
    A(\mathfrak L(T))=\dfrac{\self(T)+|T|}{2}\ .
\end{equation} 

\begin{prop}
 \label{prop:one}
 Suppose that $(M,\xi)$ is a rational homology contact three-sphere with $\widehat c(\xi)$ non-vanishing, and $T$ is a transverse link in $(M,\xi)$. Then \[\self(T)\leq2\tau_\xi(T)-|T|\:.\]
\end{prop}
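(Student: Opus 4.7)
The strategy is to extract the inequality from the transverse invariant $\mathfrak L(T)\in cHFL^-(-M,T,\s_\xi)$, whose Alexander grading computes $\self(T)$ via Equation \eqref{eq:Grading}. Since $\widehat c(\xi)\neq 0$ by hypothesis, \cite[Theorem 1.2]{LOSSz} ensures that $\mathfrak L(T)$ is non-torsion. Consequently the $\F[U]$-tower in $cHFL^-(-M,T,\s_\xi)$ containing $\mathfrak L(T)$ survives the collapse map \eqref{eq:Map} to a single nonzero class in $\widehat{HF}(-M,\s_\xi;\ell)$, and by the multi-pointed version of the LOSS construction (see \cite{Cavallo-L} and the discussion in Section \ref{section:two}) this class is precisely $\widehat c(\xi)\otimes\e_{1-\ell}$. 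Hence $\mathfrak L(T)$ lies in the tower indexed by the pair $(\widehat c(\xi),\e_{1-\ell})$.

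Since multiplication by $U$ strictly drops the Alexander grading, the top generator of that tower achieves the maximum Alexander grading $A_{\widehat c(\xi)\otimes\e_{1-\ell}}$ among its elements, which yields $A(\mathfrak L(T))\leq A_{\widehat c(\xi)\otimes\e_{1-\ell}}$. Substituting Equation \eqref{eq:Grading} on the left, and the identity $A_{\widehat c(\xi)\otimes\e_{1-\ell}}=-\tau^*_{\widehat c(\xi)}(-M,T)=\tau_\xi(T)$ on the right, where the first equality is the formula $\tau^*_\alpha(L)=-A_{\alpha\otimes\e_{1-\ell}}$ and the second is the defining relation $\tau_\xi(T)=-\tau^*_{\widehat c(\xi)}(-M,T)$ recalled above, the resulting inequality becomes $\tfrac{\self(T)+\ell}{2}\leq\tau_\xi(T)$, which rearranges to the statement.

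The delicate step is the identification of the image of $\mathfrak L(T)$ in $\widehat{HF}(-M,\s_\xi;\ell)$ with precisely $\widehat c(\xi)\otimes\e_{1-\ell}$, as opposed to some other element sharing the same Maslov grading or some combination involving distinct towers; this compatibility is already encoded in the cited refinement of the LOSS theorem to multi-pointed Heegaard diagrams and should be invoked rather than re-derived. Everything else in the proof is bookkeeping with bigradings, so once that identification is in hand the bound follows from comparing the Alexander grading of a single explicit cycle to the top Alexander grading of the tower it inhabits.
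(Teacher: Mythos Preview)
Your proof is correct and follows essentially the same approach as the paper's own argument: both use non-vanishing of $\widehat c(\xi)$ to deduce that $\mathfrak L(T)$ is non-torsion, identify its tower's image under the collapse map \eqref{eq:Map} with $\widehat c(\xi)\otimes\e_{1-\ell}$ (the paper cites \cite[Lemma 5.2]{Cavallo-L} specifically for this), and then compare $A(\mathfrak L(T))$ with the top Alexander grading $A_{\widehat c(\xi)\otimes\e_{1-\ell}}=\tau_\xi(T)$. Your additional commentary on the ``delicate step'' is accurate but not needed for the proof itself.
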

\begin{proof}
 Since by assumption $\widehat c(\xi)\in\widehat{HF}(-M,\s_\xi;\ell)$ is non-zero, we have that $\mathfrak L(T)$ is non-torsion and then \[\dfrac{\self(T)+|T|}{2}=A(\mathfrak L(T))\leq A_{\widehat c(\xi)}=\tau_\xi(T)\:.\] The first equality appears
 because of Equation \eqref{eq:Grading}, the inequality comes from the fact that the map in Equation \eqref{eq:Map} sends (the tower containing) $\mathfrak L(T)$ to $\widehat c(\xi)$ when the transverse invariant is non-vanishing \cite[Lemma 5.2]{Cavallo-L}, and the second equality follows from the definition of $\tau_\xi(T)$. 
\end{proof}

Before we establish the second part of the slice-Bennequin inequality, recall that a $\Spin^c$-cobordism  between two links $L_1$ and $L_2$ in $\Spin^c$ three-manifolds $(Y_1,\s_1)$ and $(Y_2, \s_2)$ is a triple $(X,F,\mathfrak u)$ where $X$ is a cobordism from $Y_1$ to $Y_2$,  $F$ in $X$ is a properly embedded, compact, oriented surface such that $\partial F=-L_1\sqcup L_2$, the connected components of $F$ have boundary on both $L_1$ and $L_2$, and $\mathfrak u$ is a $\Spin^c$-structure of $X$ restricting to $\s_1$ and $\s_2$ on the two ends.

\begin{prop}[Relative adjunction inequality]
 \label{prop:two}
 Suppose that $(X,F,\mathfrak u)$ is a $\Spin^c$-cobordism between $L_1\subset Y_1$ and $L_2\subset Y_2$, where $Y_i$ a rational homology three-sphere. 
 Assume that $\widehat F_{X,\mathfrak u}:\widehat{HF}_{d_1}(Y_1,\s_1;\ell_1)\rightarrow\widehat{HF}_{d_2}(Y_2,\s_2;\ell_2)$
 is such that $\widehat F_{X,\mathfrak u}(\alpha)=\beta$, where $\alpha$ and $\beta$ are non-zero.
 Then we have 
 \begin{equation}\tau_\beta(L_2)\leq\tau_\alpha(L_1)+g(F)+\ell_2-|F|-\dfrac{[F]\cdot[F]-c_1(\mathfrak u)[F]}{2}\:.\label{eqref:1}\end{equation}
 Furthermore, assume instead that $(\widehat F_{\overline X,\mathfrak u})^\bullet(\alpha')=\beta'$, where $\alpha'\in\widehat{HF}_{d_1+1-\ell_1}(Y_1,\s_1;\ell_1)$ and $\beta'\in\widehat{HF}_{d_2+1-\ell_2}(Y_2,\s_2;\ell_2)$ are non-zero. Then \begin{equation}\tau_{\beta'}(L_2)\leq\tau_{\alpha'}(L_1)+g(F)+\ell_1-|F|-\dfrac{[F]\cdot[F]-c_1(\mathfrak u)[F]}{2}\:,\label{eqref:2}\end{equation}
 where $\ell_i$ is the number of component of $L_i$.
\end{prop}
The proof of Proposition \ref{prop:two} appears in \cite[Theorems 1 and 5.17]{HR} by Hedden and Raoux. We just remark that the reason why the terms $g(F)+\ell_i-|F|$ appear in the right-side members is the filtered degree of the map $\widehat F$: decomposing the cobordism into canonical Morse pieces, we have that merge moves are filtered of degree 0 while split moves are filtered of degree 1 (this is reversed for the map in Equation \eqref{eqref:2}), and $g(F)+\ell_i-|F|$ is exactly the number of the latter ones. See \cite[Subsection 4.4]{Cavallo-upsilon} for more details.
We can now prove the slice-Bennequin inequality.

\begin{proof}[Proof of Theorem \ref{teo:sB}]
 Since $(M,\xi)$ is symplectically fillable, we have that $\widehat c(\xi)$ is non-zero \cite{Ghiggini2}. Therefore, we have that $\self(T)\leq2\tau_\xi(T)-|L|$ by Proposition \ref{prop:one}. 
    
 Now, the manifold $(W,J)$ is a symplectic cobordism 
 and induces the map $\widehat G_{\overline W,J}:=(\widehat F_{ W,J})^\bullet$, from the homology of the triple $(-M,T,\s_\xi)$ to the one of $(S^3,\bigcirc_{|F|})$ (the $|F|$-component unlink in $S^3$), which is non-zero \cite[Remark 2.14]{Ghiggini2}, as we observed in the proof of Theorem \ref{teo:supported}. 
 In other words, we have $\widehat G_{\overline W,J}(\widehat c(\xi))=\e_{1-|F|}$ and we can then use Proposition \ref{prop:two} to obtain \[\tau_\xi(T)=-\tau_{\widehat c(\xi)}(T)+\tau^*(\bigcirc_{|F|})\leq g(F)+|L|-|F|-\dfrac{[F]\cdot[F]-c_1(J)[F]}{2}\:,\]
 which means \begin{equation}
       2\tau_\xi(T)-|L|\leq-\chi(\hat F)-[\hat F]\cdot[\hat F]+c_1(J)[\hat F]
        \label{eq:sB}
 \end{equation} 
 because $\tau(\bigcirc_{|F|})=0$ and $\chi(\hat F)=2|F|-2g(F)-|L|$, where $\hat F$ is the capping of $F$ in $D^4$.
    
 The proof is concluded by observing that Equation \eqref{eq:sB} holds for every compact, oriented, properly embedded surface  $F'$ in $W$, with $\partial F'=T$. In particular, it holds for the surface that maximises the Euler characteristic in the relative homology class $\Sigma=[F']$ in $H_2(W,M;\Z)$.
\end{proof}
Note that, from \cite[Section 4]{Ghiggini}, we know that if $(W,J)$ is a Stein filling for $(M,\xi)$ then $(W,-J)$ is a Stein filling for $(M,\overline\xi)$, and the $\Spin^c$-structure induced by $\overline\xi$ is the conjugate of $\s_\xi$. If $C$ is a pseudo-holomorphic curve in $(W,J)$ then the same is true for $-C$ in $(W,-J)$. Therefore, we have the following symmetry property.   
\begin{cor}\label{cor:reverse}
 If $L$ is the boundary of a pseudo-holomorphic curve, properly embedded in $(W,J)$, then the link $-L$ obtained by reversing the orientation of $L$ is the boundary of a pseudo-holomorphic curve, properly embedded in $(W,-J)$, and $\tau_{\overline\xi}(-L)=\tau_\xi(L)$.  
\end{cor}
\begin{proof}
 It follows from Theorem \ref{teo:main} and the fact that $c_1(-J)=-c_1(J)$.
\end{proof}

We can now prove a topological version of the slice-Bennequin inequality.
\begin{proof}[Proof of Theorem \ref{teo:upper_bound}]
 Using the definition of the invariant $\widehat c(\xi)$ and $\theta=\widehat F_{W,J}(\textbf e_0)$ we have
 \[\dfrac{\self(L)+|T|}{2}\leq\tau_\xi(T)\leq\tau_\theta(T)\:.\]
 The first inequality comes from Proposition \ref{prop:one}, while the second one from Corollary \ref{cor:supported}.
\end{proof}

\section{Sharpness for Bennequin surfaces}\label{section:four}
We want to prove a partial version of Theorem \ref{teo:main} when $C$ is the positive Bennequin surface obtained from a quasi-positive braid encoded by the standard open book for $\#^NS^1\times S^2$. 

We start with a brief discussion regarding self-linking numbers. Recall that the self-linking number of a transverse link $T$ in $(S^3, \xi_{st})$ is defined as 
\begin{equation}\label{pippo}
\self(T)= w(\beta)-n 
\end{equation}
where $\beta\in B_n$ is a braid representing $T$ in the standard open book, and $w(\beta)$ denotes its writhe. This can be interpreted as the negative of the relative Euler number $e(\xi|_F,v)$ where  $F\subset M$ is a Seifert surface for $T$ in generic position with respect to $\xi$, and $v$ is a non-zero vector field  pointing in the direction of $\xi\cap TF$. There are two problems when extending this definition to a general contact three-manifold $(M,\xi)$: 
\begin{itemize}
\item Firstly, the Seifert surface $F$ may not exist. For this reason one must assume $T$ to be null-homologous in $M$ (or at least rationally null-homologous). 
\item Secondly, the right-hand side of Equation \ref{pippo} may depend on the choice of the Seifert surface. To avoid dependence in a rational homology sphere one can observe that if $F_1$ and $F_2$ are two surfaces with the same boundary link then $F=-F_1\cup F_2$ bounds a $3$-chain $\Delta$ and by Stokes theorem:
\[e(\xi|_{F_1}, v_1)-e(\xi|_{F_2}, v_2)= -\int_F c_1(\xi)= -\int_{\partial \Delta} c_1(\xi)=0 \:.\]  
\end{itemize}
If $\xi$ defines a torsion $\Spin^c$-structure on the other hand, $c_1(\xi)=0$ and still 
\[ e(\xi|_{F_1}, v_1)=e(\xi|_{F_2}, v_2) \ .\]
We conclude that there is no problem with the definition of the self-linking number $\self(T)$ of a transverse \emph{null-homologous} link braided around the standard open book of $(\#^NS^1\times S^2, \xi_0)$, where $\xi_0$ denotes the unique tight contact structure. Note that $\xi_0$ has a natural Stein filling $W=\natural^N S^1 \times D^3$.

\begin{prop}\label{prop:Bennequin} Let $T$ be the closure of a quasi-positive braid, encoded by the pointed open book $(S,P,\sigma,\Id)$ for $\#^N S^1\times S^2$, and $\mathcal B_T$ the Bennequin surface associated to some positive factorisation of $\sigma$ as in Section \ref{subsection:Bennequin}. Then 
 $\self(T)=-\hat{\chi}(\mathcal B_T)$. 
\end{prop}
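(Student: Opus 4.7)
My plan is to reduce the two equalities to two elementary observations: a combinatorial count in the Bennequin construction, and the classical writhe-minus-strands formula for self-linking of quasi-positive braids.

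The second equality $\hat\chi(\mathcal B_T) = -\self(T_0)$ is essentially bookkeeping. By the construction in Section \ref{subsection:Bennequin}, both Bennequin surfaces $\mathcal B_T$ and $\mathcal B_{T_0}$ are assembled from the same data: namely $n := |P|$ disks obtained by sweeping the arcs $c_1,\ldots,c_n$ around the respective open book, together with $m$ positively twisted bands, one for each positive half-twist in the factorization $\sigma = H_{\alpha_1}\circ\cdots\circ H_{\alpha_m}$. Attaching each band lowers $\hat\chi$ by one, giving $\hat\chi(\mathcal B_T) = n - m = \hat\chi(\mathcal B_{T_0})$. The classical Bennequin formula $\self(T_0) = w(\sigma) - n$ applied to the quasi-positive braid $T_0\subset(S^3,\xi_{\text{st}})$, which has writhe $w(\sigma) = m$, then yields $\self(T_0) = m - n = -\hat\chi(\mathcal B_{T_0})$.

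For the first equality $\self(T) = -\hat\chi(\mathcal B_T)$ in the Stein-fillable manifold $(\#^N S^1\times S^2,\xi_0)$, I would set up a two-sided squeeze. The upper bound $\self(T)\leq-\hat\chi(\mathcal B_T)$ is an immediate application of the slice-Bennequin inequality of Theorem \ref{teo:sB} to the Stein filling $(W,J_0) = (\natural^N S^1\times B^3,J_0)$: this is a rational homology ball with $c_1(J_0)=0$ and $H_2(W)=0$, so both the Chern class and self-intersection terms in Theorem \ref{teo:sB} vanish identically, while resolving the ribbon singularities of $\mathcal B_T$ slightly into the interior of $W$ produces an embedded surface $F$ with $\chi(F)=\hat\chi(\mathcal B_T)$. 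For the matching lower bound $\self(T)\geq -\hat\chi(\mathcal B_T)$, I would invoke the Stein embedding $\varphi: W\hookrightarrow B^4$ of Lemma \ref{lemma:curve}, which is obtained tautologically by attaching $2$-handles along curves disjoint from $T$ and which on the boundary identifies $T$ with $T_0$. Since the surgery is supported away from $T$, the contact and page framings of $T$ in $\partial W$ transport to those of $T_0$ in $\partial B^4 = S^3$, giving $\self(T)=\self(T_0)=m-n$.

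The technical crux is the final step: one must check that the trivialization of $\xi_0\lvert_T$ used to compute $\self(T)$ transports through $\varphi$ to the trivialization of $\xi_{\text{st}}\lvert_{T_0}$ that computes $\self(T_0)$. This should follow from the fact that the auxiliary $2$-handles are attached with contact framing along curves sitting in an auxiliary page, hence outside a tubular neighborhood of the braid; alternatively, one could bypass $\varphi$ entirely by citing the general writhe formula $\self(\hat\beta)=w(\beta)-n$ for transverse braids around arbitrary contact open books. Once this framing match is in place, both sides of the proposition collapse to the elementary combinatorial quantity $m - n$.
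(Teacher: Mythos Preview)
Your approach is genuinely different from the paper's, and it has two gaps that prevent it from going through as written.

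First, the upper bound via Theorem \ref{teo:sB} does not apply: $\#^N S^1\times S^2$ is not a rational homology three-sphere and $\natural^N S^1\times B^3$ is not a rational homology ball (contrary to what you write), while Theorem \ref{teo:sB} is stated and proved only under that hypothesis. You would need to import a slice-Bennequin inequality valid for Stein fillings of arbitrary contact three-manifolds, which the paper does not establish and which in fact sits close to the relative Thom conjecture that Proposition \ref{prop:Bennequin} is being used to reprove.

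Second, your lower bound is exactly the step you flag as the ``technical crux'', and neither of your proposed fixes is carried out. The embedding $\varphi$ comes from attaching $2$-handles along curves lying \emph{on} a page of the open book; since $T$ intersects every page, one must argue carefully that the Seifert framing and the contact framing of $T$ are both unchanged by the surgery. Your alternative, a ``general writhe formula'' $\self(\hat\beta)=w(\beta)-n$ for braids in arbitrary open books, is not a standard fact without correction terms depending on the monodromy and the choice of Seifert surface; invoking it here amounts to assuming what you want to prove.

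The paper avoids both issues by computing $\self(T)$ directly from the characteristic foliation on an embedded Seifert surface for $T$ in $\#^N S^1\times S^2$. For strongly quasi-positive braids this is immediate: the Bennequin surface is embedded with $n$ positive elliptic and $m$ positive hyperbolic singularities, giving $\self(T)=m-n$. The substantive work is in the ribbon case, where the paper excises small disks around the ribbon arcs, uses Giroux's elimination lemma to control the resulting characteristic foliation, and performs a local modification near each former singularity so that the new smooth surface $\mathcal{B}_T^*$ has exactly $e_+=n$, $h_+=m+t$, $h_-=t$ (with $t$ the number of ribbon arcs). The count $\self(T)=-(e_+-e_-)+(h_+-h_-)=m-n$ then follows. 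This is self-contained and does not appeal to any slice-Bennequin inequality or surgery comparison.
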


\begin{proof}
The Bennequin surface $\mathcal{B}_T$ is the image of an immersion $f: F \looparrowright \#^N S^1\times S^2$ with $t$ arcs of ribbon singularities. From the construction of the Bennequin surface we know that $\chi(F)=\hat{\chi}(\mathcal{B}_T)=n-m$, where $m$ denotes the length of the factorisation of $\sigma$ we choose to make the construction, and $n=|P|$ the number of strands. 

We would like to use the Bennequin surface $\mathcal{B}_T$ to compute the self-linking number. If $T$ is strongly quasi-positive, $\mathcal{B}_T$ has no singularities and it has a Morse-Smale characteristic foliation. Because of the way the surface is positioned with respect to the open book, this has $e_+=n$  positive elliptic points, and $h_+=m$ positive hyperbolic points. Thus 
\[\self(T)=-(e_+-e_-)+(h_+-h_-) =m-n=-\hat{\chi}(\mathcal{B}_T) \ .\]

For an immersed surface on the other hand, we have no globally defined  characteristic foliation. Note that we can decompose $\mathcal{B}_T$ as a union of bands $H_1, \dots , H_m$ and disks $D_1, \dots, D_n$ and arrange those to have a well defined characteristic foliation. Indeed, we can make sure  that there is one positive hyperbolic point on each band $H_i$, and one positive elliptic point on each disk $D_j$, but we need a smooth surface to compute the self-linking number.

\begin{figure}[h]
 \centering
 \vspace{0.3cm}
  \def\svgwidth{0.75\textwidth}
\begingroup%
  \makeatletter%
  \providecommand\color[2][]{%
    \errmessage{(Inkscape) Color is used for the text in Inkscape, but the package 'color.sty' is not loaded}%
    \renewcommand\color[2][]{}%
  }%
  \providecommand\transparent[1]{%
    \errmessage{(Inkscape) Transparency is used (non-zero) for the text in Inkscape, but the package 'transparent.sty' is not loaded}%
    \renewcommand\transparent[1]{}%
  }%
  \providecommand\rotatebox[2]{#2}%
  \newcommand*\fsize{\dimexpr\f@size pt\relax}%
  \newcommand*\lineheight[1]{\fontsize{\fsize}{#1\fsize}\selectfont}%
  \ifx\svgwidth\undefined%
    \setlength{\unitlength}{757.92021984bp}%
    \ifx\svgscale\undefined%
      \relax%
    \else%
      \setlength{\unitlength}{\unitlength * \real{\svgscale}}%
    \fi%
  \else%
    \setlength{\unitlength}{\svgwidth}%
  \fi%
  \global\let\svgwidth\undefined%
  \global\let\svgscale\undefined%
  \makeatother%
  \begin{picture}(1,0.36295996)%
    \lineheight{1}%
    \setlength\tabcolsep{0pt}%
    \put(0,0){\includegraphics[width=\unitlength,page=1]{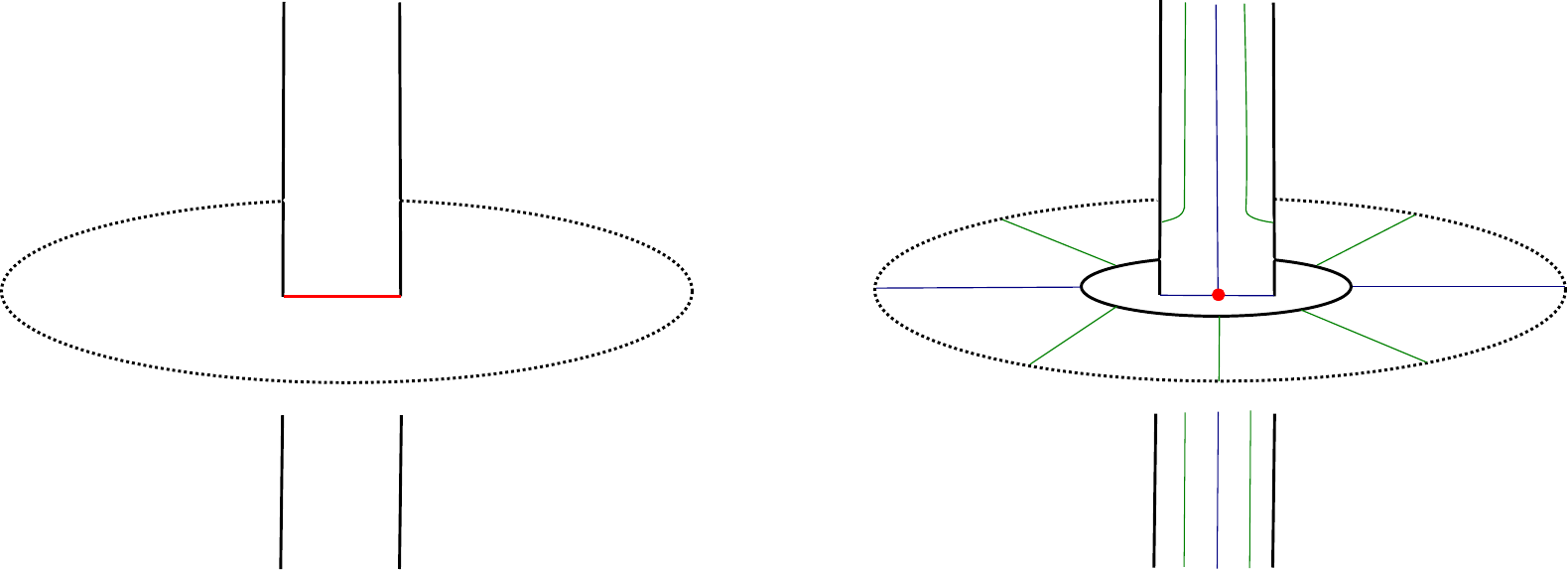}}%
    \put(0.78211349,0.18958089){\color[rgb]{0,0,0}\makebox(0,0)[lt]{\lineheight{1.25}\smash{\begin{tabular}[t]{l}$\mathfrak h_+$\end{tabular}}}}%
  \end{picture}%
\endgroup%

   \vspace{0.3cm}     
 \caption{\smaller[1]{The surface on the right-hand side is obtained by removing a negative elliptic point.}}
 \label{Foliation_A}
 \end{figure}

To overcome this problem we observe that each arc $\gamma \subset \mathcal{B}_T$ of ribbon singularities lifts to two distinct arcs in $F$. One of these arcs $\widetilde{\gamma}\subset F$ has endpoints lying in the interior of $F$, the other on $\partial F$.  For each of these arcs we choose a small disk $\Delta_\gamma \subset \text{int}(F)$ containing $\widetilde{\gamma}$ and we consider the image 
\[ \mathcal{B}_T'= f \left(F\setminus \bigcup_{\gamma } \text{int}(\Delta_\gamma )\right) \:, \]
which is now a surface with $t$ extra boundary components and no ribbon singularities. Note that for each arc of ribbon singularities $\gamma$ we can find a ball $B_\gamma \subset \#^N S^1\times S^2$ where the two surfaces $\mathcal{B}_T$ and $\mathcal{B}_T'$ differ as in Figure \ref{Foliation_A}. Of course the surface $\mathcal{B}_T'$ decomposes as the union of the bands $H_1, \dots, H_m$, and some punctured disks $D_1', \dots, D_n'$.  

Since $\mathcal{B}_T'$ is smooth we can consider its characteristic foliation $\mathcal{F}_\xi$, which can be arranged to have one positive hyperbolic point on each band $H_i$, one positive elliptic point on each punctured disk $D_i'$, and a total of $t$ negative hyperbolic points. To achieve this, we use Giroux's lemma to introduce cancelling pairs on the disks $D_i$ in order to get an isolated elliptic point at the centre of each disk $\Delta_\gamma$. We need to perform this manipulation once for each ribbon singularity for a total of $t$ times: each time we introduce a negative elliptic point, and a negative hyperbolic point. After the disks $\Delta_\gamma$ are removed to construct $\mathcal{B}_T'$ we see the $t$ negative elliptic points disappear and we are left with $t$ extra  negative hyperbolic points lying on $D_1'\cup \dots \cup D_n'$.

We now perform a further local modification: in each ball $B_\gamma$ we cut open the band $H_\gamma$ and attach the two loose ends to $\partial \Delta_i$  as shown in Figure \ref{Foliation_B}. Every time we do this we make the positive hyperbolic point on the band  disappear, and we create in exchange two new positive hyperbolic points for a net gain of one. That is we keep one positive elliptic point for each band and we introduce $t$ extra positive hyperbolic points, one for each ribbon singularity.

\begin{figure}[t]
 \centering
 \vspace{0.3cm}
  \def\svgwidth{0.35\textwidth}
\begingroup%
  \makeatletter%
  \providecommand\color[2][]{%
    \errmessage{(Inkscape) Color is used for the text in Inkscape, but the package 'color.sty' is not loaded}%
    \renewcommand\color[2][]{}%
  }%
  \providecommand\transparent[1]{%
    \errmessage{(Inkscape) Transparency is used (non-zero) for the text in Inkscape, but the package 'transparent.sty' is not loaded}%
    \renewcommand\transparent[1]{}%
  }%
  \providecommand\rotatebox[2]{#2}%
  \newcommand*\fsize{\dimexpr\f@size pt\relax}%
  \newcommand*\lineheight[1]{\fontsize{\fsize}{#1\fsize}\selectfont}%
  \ifx\svgwidth\undefined%
    \setlength{\unitlength}{335.53820512bp}%
    \ifx\svgscale\undefined%
      \relax%
    \else%
      \setlength{\unitlength}{\unitlength * \real{\svgscale}}%
    \fi%
  \else%
    \setlength{\unitlength}{\svgwidth}%
  \fi%
  \global\let\svgwidth\undefined%
  \global\let\svgscale\undefined%
  \makeatother%
  \begin{picture}(1,0.79761178)%
    \lineheight{1}%
    \setlength\tabcolsep{0pt}%
    \put(0,0){\includegraphics[width=\unitlength,page=1]{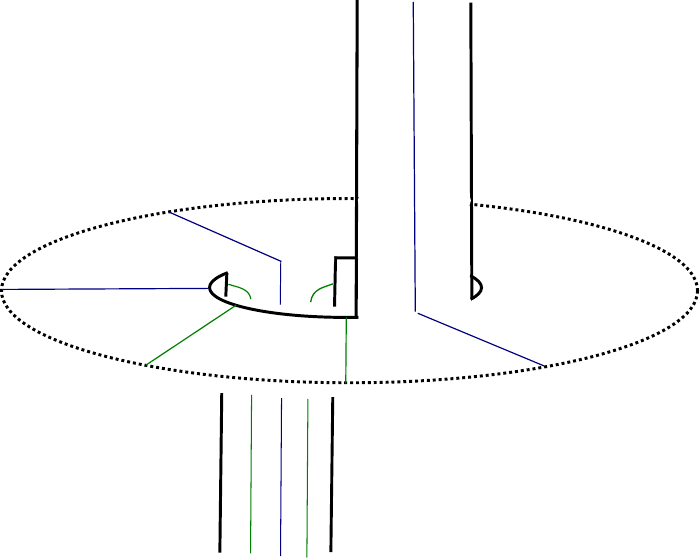}}%
    \put(0.39108132,0.45235507){\color[rgb]{0,0,0}\makebox(0,0)[lt]{\lineheight{1.25}\smash{\begin{tabular}[t]{l}$\mathfrak h_+$\end{tabular}}}}%
    \put(0.57004873,0.29662652){\color[rgb]{0,0,0}\makebox(0,0)[lt]{\lineheight{1.25}\smash{\begin{tabular}[t]{l}$\mathfrak h_+$\end{tabular}}}}%
    \put(0,0){\includegraphics[width=\unitlength,page=2]{Foliation-B.pdf}}%
  \end{picture}%
\endgroup%

  \vspace{0.3cm}      
 \caption{\smaller[1]{The characteristic foliation after removing the ribbon singularity.}}
 \label{Foliation_B}
 \end{figure}

Denote by $\mathcal{B}_T^*$ the surface obtained from $\mathcal{B}_T'$  via the local modification of Figure \ref{Foliation_B}, and by $\mathcal{F}_\xi^*$ its characteristic foliation. Since $\mathcal{B}_T^*$ is smooth and $\partial \mathcal{B}_T^*=T$  we can look at the singularities of $\mathcal{F}_\xi^*$ to compute $\self(T)$. It follows from our discussion that there are $e_+=n$ positive elliptic points (one on each punctured disk $D_i'$), $h_+=m+t$ positive hyperbolic points (distributed on the bands) and a total of $h_-= t$ negative hyperbolic points (distributed on the punctured disks). 
Thus 
\[ \self(T)=-(e_+-e_-)+(h_+-h_-)=-(n-0)+(m+t-t)=-\hat{\chi}(\mathcal B_T)\ .\]
\end{proof}

We can now prove Theorem \ref{teo:main} for a generic pseudo-holomorphic curve. 

\begin{proof}[Proof of Theorem \ref{teo:main}]
 To conclude we just need to show that for a properly embedded pseudo-holomorphic curve $C$, in a Stein filling $(W,J)$ with $\mathcal L=\partial C$, the slice-Bennequin inequality we established in Theorem \ref{teo:sB} is sharp. In other words, we need to show that \[\self(\mathcal L)=-\chi(C)-[C]\cdot[C]+c_1(J)[C]\:.\]
 By Lemma \ref{lemma:curve}, we know that $C=\mathcal B_T\sqcup_TC'$: here $T$ is the closure of a quasi-positive braid, with respect to the standard open book of the Stein fillable structure $\xi_0$ on $\#^NS^1\times S^2$, and $C'$ consists of $\ell$ disjoint embedded cylinders, where $\ell$ is the number of components of $\mathcal L$. 
 More precisely, we have a Stein concordance $(W',C',J')$ from $(\#^NS^1\times S^2,T,\s_{\xi_0})$ to $(M,\mathcal L,\s_\xi)$ induced by $(-1)$-contact surgery on a link. Moreover, \[c_1(J)[C]=c_1(J')[C']\hspace{1cm}\text{ and }\hspace{1cm}[C]\cdot[C]=[C']\cdot[C']\] because $H_2(\natural^NS^1\times D^3,\#^NS^1\times S^2;\Z)\cong\{0\}$. Proposition \ref{prop:Bennequin} then tells us that \[\self(T)=-\chi(\mathcal B_T)=-\chi(C)\:,\] implying that if we show that \begin{equation}
  \self(T)=\self(\mathcal L)+[C']\cdot[C']-c_1(J')[C']
  \label{eq:shift}
 \end{equation}
 then we are done. 
 We prove Equation \eqref{eq:shift} by directly computing how the self-linking number changes under contact surgery.
 We write $K=K^+\cup K^-\subset(S^3,\xi_{\text{st}})$ for the Legendrian links in the contact $(\pm\:1)$-surgery presentation of $(M,\xi)$, obtained by applying the construction in \cite{DG} to the pointed open book that represents $\mathcal L$ with respect to $(W,J)$. Hence, there is a transverse link $T_1\subset(S^3,\xist)$, disjoint from $K$, such that $T_0$ is gotten from it by applying contact $(+1)$-surgery along $K^+$.

 Writing $K=K_1\cup...\cup K_t$, let $a_i$ be the integral surgery coefficient on the link component $K_i$; i.e. $a_i=\tb(K_i)-1$ if $K_i\in K^-$ and $a_i=0$ if $K_i\in K^+$. Define the linking matrix \[Q_k(a_0,a_1,...,a_t)=(q_{i,j})_{i,j=0,...,t}\hspace{1cm}\text{ where }\hspace{1cm}
 q_{i,j}=\left\{\begin{aligned}
 &a_i\hspace{2.5cm}\text{ if }i=j \\
 &\lk(K_i,K_j)\hspace{1cm}\text{ if }i\neq j\end{aligned}\right.\:,\]
 with the convention that $T_1^k=K_0$, the $k$-th component of $T_1$, and $a_0=0$. Similarly, let $Q=Q(a_1, ...,a_t)$ denote the surgery matrix $(q_{i,j})_{i,j=1,...,t}$. We have \[[C']\cdot[C']=-\sum_{k=1}^\ell\dfrac{\det\left(Q_k(0,a_1,...,a_t)\right)}{\det Q}+2\sum_{a<b}\left\langle\left(\begin{matrix}
    \lk(T_1^a,K_1) \\ \vdots \\ \lk(T_1^a,K_t)
\end{matrix}\right),\:Q^{-1}\left(\begin{matrix}
    \lk(T_1^b,K_1) \\ \vdots \\ \lk(T_1^b,K_t)
\end{matrix}\right)\right\rangle\]
and \[c_1(J)[C']=\sum_{k=1}^\ell\left\langle\left(\begin{matrix}
    \rot(K_1) \\ \vdots \\ \rot(K_t)
\end{matrix}\right),\:Q^{-1}\left(\begin{matrix}
    \lk(T_1^k,K_1) \\ \vdots \\ \lk(T_1^k,K_t)
\end{matrix}\right)\right\rangle\:.\] 

 Let us call $\mathcal L_L$ and $T_L$ a Legendrian approximation for $\mathcal L$ in $(M,\xi)$ and $T_1$ in $(S^3,\xi_{\text{st}})$ respectively. We obtain
  \[\begin{aligned}
  \self(\mathcal L)
  &=\tb(\mathcal L_L)-\rot(\mathcal L_L)\\
  &=\sum_{k=1}^\ell\tb(\mathcal L^k_L)+2\sum_{a<b}\lk(\mathcal L^a_L,\mathcal L^b_L)-\sum_{k=1}^\ell\rot(\mathcal L^k_L)\\
  &=\sum_{k=1}^\ell\Bigg(\tb(T^k_L)+\dfrac{\det\left(Q_k(0,a_1,...,a_t)\right)}{\det Q}\Bigg)\\
  &+2\sum_{a<b}\left(\lk(T^a_L,T^b_L)-\left\langle\left(\begin{matrix}
    \lk(T_1^a,K_1) \\ \vdots \\ \lk(T_1^a,K_t)
 \end{matrix}\right),\:Q^{-1}\left(\begin{matrix}
    \lk(T_1^b,K_1) \\ \vdots \\ \lk(T_1^b,K_t)
 \end{matrix}\right)\right\rangle\right)\\
 &-\sum_{k=1}^\ell\left(\rot(T^k_L)-\left\langle\left(\begin{matrix}
    \rot(K_1) \\ \vdots \\ \rot(K_t)
\end{matrix}\right),\:Q^{-1}\left(\begin{matrix}
    \lk(T_1^k,K_1) \\ \vdots \\ \lk(T_1^k,K_t)
\end{matrix}\right)\right\rangle\right)\\
&=\self(T_1)-[C']\cdot[C']+c_1(J)[C'] \\
&=\self(T)-[C']\cdot[C']+c_1(J)[C']\:, 
 \end{aligned}\]  since the fact that $T$ is null-homologous implies $\self(T_1)=\self(T)$. The values of $\tb(\mathcal L_L)$ and $\rot(\mathcal L_L)$ are found using the formulae in \cite[Lemma 6.6]{LOSSz}.
\end{proof}

\begin{remark}
 In order to prove Equation \eqref{eq:shift}, we could also note that since $C'$ is a positive ascending surface \cite{Hayden} then the map induced in link Floer homology by the Stein concordance $(W',C',J')$ sends the transverse invariant $\mathfrak L(\mathcal L)$ onto $\mathfrak L(T)$. The shift of the Alexander grading, which determines the self-linking number from Equation \eqref{eq:Grading}, can then be computed using the results in \cite{JMZ}. Such an argument was communicated to us by Zemke.
 In the paper we decided to include the given proof since it only requires some linear algebra computations.   
\end{remark}


\section{Applications}\label{section:six}
\subsection{Proof of the main results} Finally, we prove Theorem \ref{teo:relative}, Proposition \ref{prop:relative} and Corollary \ref{cor:intro} from the introduction.
\begin{proof}[Proof of Theorem \ref{teo:relative}] Suppose we have some smoothly embedded surface $F\subset W$ bounding a link $T\subset M$ and that $T$ is    also the boundary of a pseudo-holomorphic curve $C$. By Theorem \ref{teo:main} and the relative adjunction inequality we have that \[-\chi(C) + c_1(J)[C]-    [C]^2=2\tau_\xi(T)-|T|\leq -\chi(F) + c_1(J)[F]-[F]^2 \ . \]  
 On the other hand, if $[F]=[C]$ in $H_2(W, M;\Z)$ then $ c_1(J)[C]-[C]^2= c_1(J)[F]-[F]^2$. Thus, we have that $\chi(C)\geq \chi(F)$ exactly as claimed.
\end{proof}
\begin{proof}[Proof of Proposition \ref{prop:relative} and Corollary \ref{cor:intro}]
 From Theorems \ref{teo:upper_bound} and \ref{teo:sB} we have the inequalities \[\self(T)\leq2\tau_\xi(T)-|T|\leq 2\tau_\theta(T)-|T|\leq -\chi(C) \] and we can run the same proof of Theorem \ref{teo:main} with the topological tau-invariant $\tau_\theta(T)$ instead. The second claim in the statements follows by combining the first one with \cite[Lemma 2.8]{MT} to get that $\tau_\xi(T)\leq\tau(M,T,\s)\leq\tau_\theta(T)$ is sharp.
\end{proof}

\subsection{The genus bound for links}
We observe that Lemma \ref{lemma:mirror} implies that for every $\Spin^c$ three-manifold $(Y,\s)$ and link $L\subset M$ one has
\begin{equation}
 \label{eq:mirror}
   \tau(-Y,L,\s)=-\tau^*(Y,L,\s)\:.
\end{equation}
As in the previous section, let $F$ denote a compact, oriented surface, properly embedded in a four-manifold $X$.

\begin{prop}  \label{prop:bound}
Suppose that $L$ is a link in a rational homology three-sphere $Y$, and that $L_0$ is a link in $S^3$. Moreover, assume that $(X,F)$ is a rational homology cobordism from $(L_0,S^3)$ to $(L,Y)$ with $|F|=|L_0|$. Then whenever the $\Spin^c$-structure $\s$ extends over $X$ we have that
 \begin{itemize}
     \item if $|L|=|L_0|$ then  $|\tau(Y,L,\s)-\tau(L_0)|\leq g(F)$;
     \item if $L_0$ is the unlink then $-g(F)\leq\tau(Y,L,\s)\leq g(F)+|L|-|F|$; in particular, one has $2\tau(Y,L,\s)-|L|\leq -\chi(\hat F)$, where $\hat F$ is the capping of $F$ in $D^4$.
 \end{itemize} 
\end{prop}
\begin{proof}
 Take a $\Spin^c$-structure $\mathfrak u$ on $X$ that extends $\s$. Since $X$ is a rational homology cobordism, one has that $X$ and $-X$ are negative-definite; hence, we can apply the inequalities in Proposition \ref{prop:two} with the classes $\theta$ and $\eta$, which are chosen as $\theta=\widehat F_{X,\mathfrak u}(\textbf e_0)$ and $\eta=\widehat G_{-X,\mathfrak u}(\textbf e_{1-|L_0|})=(\widehat F_{-\overline X,\uu})^\bullet(\textbf e_{1-|L_0|})$. Note that $\theta$ and $\eta$ are sent to $\Theta^+$ and $\Xi^+$ respectively, because of the commutation of the cobordism maps \cite{OSz-negative}. 
 Continuing the argument, Equation \eqref{eqref:2}, after reversing the orientation of $X$, and Equation \eqref{eq:mirror} give exactly \[-g(F)-\tau^*(L_0^*)\leq-\tau_{\eta}(L)\leq-\tau^*(-Y,L,\s)=\tau(Y,L,\s)\:,\] while Equation \eqref{eqref:1} gives \[\tau(Y,L,\s)\leq\tau_\theta(L)\leq\tau(L_0)+g(F)+|L|-|L_0|\:.\] Since $L_0\subset S^3$ one has $\tau^*(L_0^*)=-\tau(L_0)$ from which the first inequality in the statement follows. For the second one, we argue in the same way as before and, since $\tau(\bigcirc_{|L_0|})=0$, we obtain \[-g(F)\leq\tau(Y,L,\s)\leq g(F)+|L|-|L_0|\] which implies the claim.
\end{proof}

Proposition \ref{prop:bound} extends \cite[Proposition 4.7]{Cavallo-tau} and \cite[Theorem 3.6]{GRS}, see also \cite[Theorem 1.5]{Cavallo-upsilon}.

\subsection{Obstructions}
\begin{proof}[Proof of Corollary \ref{obs2}]  
 Suppose that $L$ bounds a pseudo-holomorphic curve $C$ in $(W,J)$.
 From Theorem \ref{teo:main} and Corollary \ref{cor:intro} we have \[\tau(M,L,\s_\xi)=\tau_\xi(L)=\dfrac{|L|-\chi_4^W(L)}{2}=g(C)+|L|-|C|\:.\] From Proposition \ref{prop:bound} we obtain \[-\tau(M,L,\s_\xi)=-g(C)-|L|+|C|\leq-g(C)\leq\tau(M,L,\mathfrak t)\leq g(C)+|L|-|C|=\tau(M,L,\s_\xi)\] and then $|\tau(M,L,\mathfrak t)|\leq\tau(M,L,\s_\xi)$ for every $\Spin^c$-structure $\mathfrak t$ that extends over $W$.
\end{proof}
In particular, when a quasi-positive $\ell$-component link $L$ in $M$ satisfies $\tau_\xi(L)=0$, and bounds a pseudo-holomorphic curve in a rational homology ball Stein filling $(W,J)$ of $(M,\xi)$, we have that $L$ is forced to be rationally slice in $W$. This is because of Theorem \ref{teo:main}: one has \[-\ell=2\tau_\xi(L)-\ell=-\chi^W_4(L)\hspace{1cm}\text{ which implies  }\hspace{1cm}\chi_4^W(L)=\ell\:,\] and this is the maximum possible value for $\chi^W_4(L)$ that can only be obtained by a surface consisting of $\ell$ disjoint disks. 
From Proposition \ref{prop:bound} we have that $L$ has a metaboliser $G$ such that $\tau(M,L,\mathfrak \s_\xi+\alpha)=0=\tau(M,L,\s_\xi)$ for every $\alpha\in G$. This holds also without the assumption on $c^+(\xi)$.
\begin{proof}[Proof of Corollary \ref{obs1}]
 If $L$ bounds a pseudo-holomorphic curve when perturbing both $J$ and $-J$ then we should have \[\tau(M,L,\s_\xi)=\tau_\xi(L)=\tau_{\overline\xi}(L)=\tau(M,L,\overline{\s_\xi})\:,\] but this is a contradiction.
\end{proof}

\subsection{A bound for the piece-wise slice genus of a link} 
The strong piece-wise linear slice genus $\mathfrak g_{PL}$ is defined as the minimal genus of a PL-cobordism between two links, such that each of its connected component is a PL-concordance of knots. A looser definition can be given by considering every PL-cobordism instead, and such an invariant is denoted by $g_{PL}$. In this paper we work with $\mathfrak g_{PL}$ in light of the examples in Section \ref{section:seven}. 

\begin{proof}[Proof Theorem \ref{teo:PL}]Suppose that the surface $F$ has minimal genus within all $PL$ surfaces with $\ell=|L|$ connected components  each bounding a different component of the link $L$. 

Let $F_1, \dots , F_\ell$ denote the connected components of the surface $F$. For each component $F_i$  choose a piece-wise linear path $\gamma_i\subset F_i$ passing through the PL singularities on the component, and some small tubular neighbourhood    $\gamma_i\subset N(\gamma_i)$ so that:
\begin{itemize}
\item $N(\gamma_i)$ contains all the singularities of $F_i$;
\item $N(\gamma_i) \cap N(\gamma_j)= \emptyset$ for $i \not= j$;
\item $N(\gamma_i)\cap F_i$ in some knot $K_i\subset \partial N(\gamma_i) \simeq S^3$. 
\end{itemize}
 
Piping $N(\gamma_1), \dots , N(\gamma_\ell)$ together we get a smooth four-ball $B\subset X$ whose complement $Z=X\setminus B$ contains a smooth cobordism $F^*=F\cap Z$ from some link $L_0$ in $S^3$ with $\ell$ components  to the link  $L$ in $Y$. Of course $g(F)=g(F^*)$, and $F^*$ has $\ell$ connected components  $F_1^*, \dots , F_\ell^*$ each providing a cobordims from some component of $L_0$ to some component of $L$.
 
 Now let $\s_1$ and $\s_2$ be two $\text{Spin}^c$-structures on $Y$ extending over $X$ such that $\tau(Y,L, \s_1)=\tau_{\max}^X(Y,L)$, and $\tau(Y,L, \s_2)=\tau_{\min}^X(Y,L)$. Using Proposition \ref{prop:bound} we obtain two inequalities
 \[|\tau(Y,L,\s_1)-\tau(L_0)|\leq g(F^*)\hspace{1cm}\text{ and }\hspace{1cm}|\tau(L_0)-\tau(Y,L,\s_2)|\leq g(F^*)\:.\] 
 Summing the two inequalities, and applying the triangle inequality we get the desired inequality.
 
 Geometrically this is the same as saying that the two $\text{Spin}^c$-structures  $\s_1$ and $\s_2$ have extension $\mathfrak{u}_1$ and $\mathfrak{u}_2$ that restrict to the unique $\Spin^c$-structure of $S^3$ on the left-hand of the cobordism $Z$. Thus there is $\text{Spin}^c$-structure $\mathfrak{u}$ on the double  $W=-Z\cup_Y Z $ interpolating between $\s_1$ and $\s_2$. Using  Proposition \ref{prop:bound} we get an inequality
\[|\tau(Y,L,\s_1)-\tau(Y,L,\s_2)|\leq g(-F^*\cup_{L_0}F^*)=2g(F) \ ,\]
exactly as claimed.
\end{proof}

Since in general $\mathfrak g_{\text{PL}}(L)\leq g_4(L)$, we have that the lower bound in Theorem \ref{teo:PL} holds also for the smooth slice genus. Although, the reader should note that such a bound is usually worse compared to the one given by Proposition \ref{prop:two}; in fact, for \emph{local links}, i.e. links embedded in a 3-ball in $Y$, all the $\tau$'s have the same value and then Theorem \ref{teo:PL} does not give any information. 
Moreover, we have the following corollary.
\begin{cor} 
 \label{cor:concordance}
 If a link $L$ as before is such that $\tau^X_{\max}(Y,L)\neq\tau^X_{\min}(Y,L)$ then it is not rational homology concordant to a local link in any rational homology three-sphere.
\end{cor}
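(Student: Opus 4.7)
The plan is to argue the contrapositive: suppose $L \subset Y$ is rationally homology concordant to a local link $L' \subset Y'$, via a rational homology cobordism $Z$ containing $|L|$ disjoint annuli $A$ connecting the components of $L$ to those of $L'$. I want to conclude that $\tau^X_{\max}(L) = \tau^X_{\min}(L)$, which contradicts the hypothesis.

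The first step is to extend Proposition \ref{prop:bound} from rational homology cobordisms with source $S^3$ to rational homology cobordisms between two arbitrary rational homology three-spheres. The argument carries through verbatim, because the key ingredients --- the relative adjunction inequality of Proposition \ref{prop:two} together with the fact that a rational homology $\Spin^c$-cobordism induces a grading-preserving isomorphism on $HF^\infty$ and hence intertwines the projection onto the $\theta$- and $\theta^*$-supported summands on both boundary components --- are insensitive to the source three-manifold. Applied to the concordance $(Z, A, \mathfrak{u})$ with $g(A) = 0$, this yields $\tau(L, \s) = \tau(L', \s')$ for every $\mathfrak{u} \in \Spin^c(Z)$ with restrictions $\s = \mathfrak{u}|_Y$ and $\s' = \mathfrak{u}|_{Y'}$. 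Since $L' \subset B^3 \subset Y'$ is local, one can choose a pointed Heegaard diagram for $(Y', L', \s')$ with all basepoints sitting inside $B^3$; under such a choice, the link Floer filtration is inherited from the corresponding $S^3$-diagram, so $\tau(L', \s') = \tau(L')$ is independent of $\s'$ and coincides with the ordinary $\tau$-invariant of $L' \subset S^3$. Finally, to see that every $\s \in \Spin^c(Y)$ extending over $X$ arises as the restriction of some $\mathfrak{u} \in \Spin^c(Z)$, form the rational homology four-ball $W := X \cup_Y Z$ filling $Y'$: any extension $\mathfrak{u}_X$ of $\s$ over $X$ lifts to a $\Spin^c$-structure on $W$, whose restriction to $Z$ provides the desired $\mathfrak{u}$. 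Combining these observations, $\tau(L, \s) = \tau(L')$ for every $\s$ extending over $X$, contradicting $\tau^X_{\max}(L) \ne \tau^X_{\min}(L)$.

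I expect the main obstacle to be the first step: while the $S^3$-source case is established in the paper (following Grigsby--Ruberman--Strle and the second author), the generalisation to cobordisms between arbitrary rational homology three-spheres requires spelling out that the cobordism map on $\widehat{HF}$ continues to preserve the relevant $\theta$-supported subspaces --- this hinges on $Z$ being a rational homology cobordism, so that the leftmost vertical map in the analogue of the diagram used in the proof of Theorem \ref{teo:supported} is a grading-preserving isomorphism. The $\Spin^c$-extension step, while essentially cohomological and mild, should also be verified carefully, as torsion in $H^3(Z, Y; \Z)$ could in principle create an obstruction.
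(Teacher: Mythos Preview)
Your proposal is essentially correct and matches the paper's reasoning. The paper does not give an explicit proof of this corollary; it is stated immediately after the remark that ``for \emph{local links}, i.e.\ links embedded in a 3-ball in $Y$, all the $\tau$'s have the same value'', and is meant to follow from that observation together with the cobordism inequalities already established (Proposition~\ref{prop:bound} and its ambient generalisation, which you correctly note goes through because the relevant cobordism maps still carry $\theta$-supported classes to $\theta$-supported classes over a rational homology cobordism). Your outline is therefore more detailed than what the paper itself supplies.

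The one point that deserves further comment is exactly the one you flag: the $\Spin^c$-extension step. Your claim that ``any extension $\mathfrak{u}_X$ of $\s$ over $X$ lifts to a $\Spin^c$-structure on $W=X\cup_Y Z$'' is not automatic; the obstruction lies in $H^3(W,X;\Z)\cong H^3(Z,Y;\Z)\cong H_1(Z,Y';\Z)$, which is finite but need not vanish. So as written, you have only shown that $\tau(L,\s)=\tau(L')$ for those $\s$ in the image of $\Spin^c(W)\to\Spin^c(Y)$, which is a priori a subset of the $\s$ extending over $X$. The paper glosses over this as well, so you are in good company; but in a fully rigorous treatment one would either need to verify this extension directly (perhaps using additional structure, such as both metabolisers having the same order $\sqrt{|H_1(Y)|}$ and arguing they coincide), or run a variant of the double-cobordism argument from the proof of Theorem~\ref{teo:PL} to compare $\tau(L,\s_1)$ and $\tau(L,\s_2)$ without passing through $Y'$. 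Your awareness of this subtlety is appropriate.
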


\subsection{Invariants of covering} We now prove Theorem \ref{teo:double}.

\begin{proof}[Proof of Theorem \ref{teo:double}] If $K$ in $S^3$ is a quasi-positive knot and $C\subset D^4$ is a holomorphic curve bounding $K$ then as consequence of the work of Loi and Piergallini \cite{LP} the branched double-cover of $D^4$ along $C$ is a Stein domain $(W,J)$, bounding the double-cover $\Sigma(K)$ of $S^3$ branching along $K$. Indeed, the fixed point set of the covering involution $\sigma: W\to W$ is a complex curve $\widetilde{C}\subset W$ with $\partial\widetilde{C} = \widetilde{K}$ the pull-back knot. We denote by $\xi_0$ the contact structure induced by $J$ on $\partial W=\Sigma(K)$.

Let $F$ be a Seifert surface for $K$ in $S^3$. Since $H_3(D^4;\Z)=0$ we can find a $3$-chain $M\subset D^4$ with $\partial M= C-F$. This lifts to two distinct $3$-chains $\widetilde{M}$ and $\sigma(\widetilde{M})$ overlapping over $\widetilde{C}$, furthermore: $\widetilde{M}=\widetilde{C} -\widetilde{F}$
where $\widetilde{F}$ is one of the two lifts of the Seifert surface $F$ to the branched double-covering $\Sigma(K)$. Consequently, we have that $\widetilde{C}=\partial \widetilde{M}$ in the relative homology group $H_3(W, \partial W ; \Z)$, showing that:
\[\tau_{\xi_0}(\widetilde{K})=-\dfrac{\chi(\widetilde{C})-1-c_1(J)[ \widetilde{C}] + [\widetilde{C}]^2}{2}=\dfrac{1 -\chi(\widetilde{C})}{2}=\dfrac{1-\chi(C)}{2}=\tau(K) \ . \]  
where of course the first identity is due to Theorem \ref{teo:main}.

Since $K$ is a quasi-alternating knot we can conclude \cite{MO} that $\Sigma(K)$ is an Heegaard Floer $L$-space, thus $\tau_{\xi_0}(\widetilde{K})= \tau(\widetilde{K},\s_0)$ where $\s_0$ denotes the $\text{Spin}^c$-structure associated to the homotopy class of $\xi_0$. 

Finally, we need to identify $\s_0$ with the spin structure of $\Sigma(K)$. To this end we observe that: the $\text{Spin}^c$-structure $J$ is self-conjugate since we can run the same argument with the opposite orientation, but the branched double-covering construction in \cite{LP} does not depend on orientations. On the other hand, for a knot $K$ in $S^3$ we have that $H_1(\Sigma(K); \Z/2\Z)=0$, so there is no 2-torsion in $H^2(W;\Z)$ and $J$ is forced to be the unique spin structure supported by $W$. Clearly, it follows that $\s_0$ is spin, being the restriction of $J$ to $\Sigma(K)$.
\end{proof}

\section{Examples}\label{section:seven}
In this section we denote $\tau(Y,L,\s)$ by $\tau(L,\s)$, omitting the name of the three-manifold. 
\subsection{A family of quasi-positive links in \texorpdfstring{$\mathbf{\emph{L}(4,1)}$}{}}
The lens space $L(4,1)$ bounds a rational homology ball $W$ given by the complement of a generic quadric $Q$ in $\C P^2$. Using the Veronese embedding $W \to \C^6$, one can obtain a Stein fillable contact structure $\xi$ on $L(4,1)$. A degree $d$ smooth curve $\overline{C}$ intersects a generic quadric $Q \subset \C P^2$ transversely in $2d$ points. Thus $C=\overline{C} \setminus Q= \overline{C} \cap W$ is a holomorphic curve intersecting $\partial W = L(4,1)$ in a link $L_{2d}$ with $2d$ components.

We have that $L_{2d}$, see Figure \ref{L_2d}, bounds a connected holomorphic curve $C$ in $(W,J)$ with genus $\frac{(d-1)(d-2)}{2}$. 
This implies that \[2\tau_\xi(L_{2d})-2d=-\chi(C)=(d-1)(d-2)+2d-2=d^2-d\:,\] and then \[\tau_\xi(L_{2d})=\dfrac{d(d+1)}{2}\] where $\xi$ is the Stein fillable structure on $L(4,1)$ induced by $(W,J)$.
According to the symmetry under mirroring and the fact that $L(4,1)$ is an $L$-space, we have shown that $\tau(L_{2d},\s_\xi)=\frac{d(d+1)}{2}$. Note that, since $W$ is rational homology ball, the $\Spin^c$-structure $\s_\xi$ should be one of the two (conjugate) structures with correction term zero. 
\begin{prop}
 \label{prop:L_2d}
 We have that \[\left|\tau_{\max}(L_{2d})-\tau_{\min}(L_{2d})\right|=d\:,\] and then $\mathfrak g_{\text{PL}}(L_{2d})\geq\lceil\frac{d}{2}\rceil>0$. In particular, $L_{2d}$ is not rational homology concordant to any link in $S^3$.    
\end{prop}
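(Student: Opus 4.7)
The plan is to verify the claimed equality $|\tau^W_{\max}(L_{2d})-\tau^W_{\min}(L_{2d})|=d$ and then feed the result into Theorem \ref{teo:PL} and Corollary \ref{cor:concordance}.

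The maximum value is already at hand from the discussion preceding the statement. The link $L_{2d}$ bounds the connected $J$-holomorphic curve $C$ of genus $(d-1)(d-2)/2$ inside the Stein filling $(W,J)$, and since $L(4,1)$ is an $L$-space, Corollary \ref{cor:intro} gives $\tau^W_{\max}(L_{2d})=\tau_\xi(L_{2d})=\frac{d(d+1)}{2}$. Because $|H_1(L(4,1);\Z)|=4$, precisely $\sqrt{4}=2$ $\Spin^c$-structures extend over the rational homology ball $W$, both of which are Spin structures; call them $\s_\xi$ and $\s'$.

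The substantive step is to show $\tau_{\s'}(L_{2d})=\frac{d(d-1)}{2}$. My plan is to sandwich this value between matching upper and lower bounds. For the upper bound, I would exhibit an explicit smooth surface $F'\subset W$ with $\partial F'=L_{2d}$ whose relative homology class restricts to $\s'$ and whose Euler characteristic equals $|L_{2d}|-d(d-1)$, so that Proposition \ref{prop:bound} yields $\tau_{\s'}(L_{2d})\leq \frac{d(d-1)}{2}$. A natural source for such an $F'$ is a non-generic algebraic degree-$d$ curve passing through prescribed points on the quadric $Q\subset\C P^2$, adjusted so that its intersection with $\partial W=L(4,1)$ lies in the relative homology class dual to $\s'-\s_\xi$. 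For the matching lower bound, the cleanest route is a direct lattice cohomology computation in the spirit of the proof of Proposition \ref{prop:last} for the family $N_k\subset L(9,2)$; the almost-rational plumbing presentation of $L(4,1)$ keeps this computation very tractable.

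Once the two values are in hand, the difference is $d$, and Theorem \ref{teo:PL} immediately gives $g^*_{\text{PL}}(L_{2d})\geq d/2$, hence $g^*_{\text{PL}}(L_{2d})\geq\lceil d/2\rceil$ by integrality. Since $d\geq 1$, this lower bound is strictly positive, and Corollary \ref{cor:concordance} then implies that $L_{2d}$ is not rational homology concordant to any local link in any rational homology three-sphere, and in particular not to any link in $S^3$.

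The main obstacle is pinpointing the precise value of $\tau_{\s'}(L_{2d})$. The slice--Bennequin upper bound from Theorem \ref{teo:sB} is straightforward once a candidate surface $F'$ is produced, but the matching lower bound is what genuinely requires a computation: lattice cohomology (adapted to the second extending $\Spin^c$-structure) is the most promising tool, since a direct Floer-theoretic computation in multi-pointed Heegaard diagrams for $L(4,1)$ would be considerably heavier.
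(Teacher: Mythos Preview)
Your proposal has a genuine gap in the computation of $\tau_{\s'}(L_{2d})$, and the paper's route is quite different from yours.

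First, a small correction: the two $\Spin^c$-structures on $L(4,1)$ extending over $W$ are \emph{not} Spin. They are the conjugate pair $\s_\xi$ and $\overline{\s_\xi}$, the two structures with $d=0$; the two self-conjugate (Spin) structures on $L(4,1)$ have $d=3/4$ and $d=-1/4$ and do not extend.

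More seriously, your upper-bound strategy cannot work as stated. You propose to find a surface $F'\subset W$ whose ``relative homology class restricts to $\s'$'' and then apply Proposition~\ref{prop:bound}. But a surface class does not restrict to a $\Spin^c$-structure, and in any case Proposition~\ref{prop:bound} is insensitive to which extending $\Spin^c$-structure you take: since $W$ is a rational homology ball, the correction terms $c_1(\mathfrak u)[F]$ and $[F]\cdot[F]$ in the adjunction inequality vanish, and the resulting bound $2\tau(L,\s)-|L|\leq -\chi(\hat F)$ holds \emph{simultaneously} for every $\s$ extending over $W$. So any surface in $W$ giving $\tau_{\s'}(L_{2d})\leq \frac{d(d-1)}{2}$ would equally give $\tau_{\s_\xi}(L_{2d})\leq \frac{d(d-1)}{2}$, contradicting the value $\frac{d(d+1)}{2}$ you already know. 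Your lattice-cohomology suggestion is on firmer ground (Theorem~\ref{teo:Antonio} would actually give the exact value, not just a lower bound), but you have not carried it out.

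The paper takes a cleaner and rather different route: it abandons $W$ for this step and observes that $L_{2d}$ is \emph{also} quasi-positive with respect to the plumbing filling $D_{-4}$, where it bounds a $J$-holomorphic curve $C'$ consisting of $2d$ meridional disks. Since $b_2(D_{-4})=1$, the terms $c_1(J_i)[C']$ and $[C']\cdot[C']$ are nonzero and distinguish the Stein structures $J_1,J_2$ restricting to $\s_\xi,\overline{\s_\xi}$. Applying Theorem~\ref{teo:main} in $D_{-4}$ gives the exact value $\tau(L_{2d},\overline{\s_\xi})=\frac{d(d-1)}{2}$ in one stroke, with no sandwiching needed. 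The remainder of your argument (Theorem~\ref{teo:PL} and Corollary~\ref{cor:concordance}) matches the paper.
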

\begin{proof}
 Since $L(4,1)$ is a lens space, we have that $W$ admits $\Spin^c$-structures which restrict to $\sqrt{|H_1(L(4,1);\Z)|}=2$ $\Spin^c$-structures on $L(4,1)$: in this case those structures are already identified as $\s_\xi$ and its conjugate $\overline{\s_\xi}$. Hence, there are two possible $\tau$'s: one is $\tau_\xi(L_{2d})$, which we already computed, and the other one is $\tau(L_{2d},\overline{\s_\xi})$.

 In order to compute $\tau(L_{2d},\overline{\s_\xi})$ we observe that the link $L_{2d}$ bounds holomorphic curves also in the Stein filling $D_{-4}$ displayed in Figure \ref{L(4,1)}. More precisely, such a four-manifold (which is not a rational homology ball) has three Stein structures, two of which, call them $J_1$ and $J_2$ as in Figure \ref{L(4,1)}, restricts to $\s_\xi$ and $\overline{\s_\xi}$ respectively.
 \begin{figure}[t]
 \centering
 \vspace{0.6cm}
  \def\svgwidth{6cm}
\begingroup%
  \makeatletter%
  \providecommand\color[2][]{%
    \errmessage{(Inkscape) Color is used for the text in Inkscape, but the package 'color.sty' is not loaded}%
    \renewcommand\color[2][]{}%
  }%
  \providecommand\transparent[1]{%
    \errmessage{(Inkscape) Transparency is used (non-zero) for the text in Inkscape, but the package 'transparent.sty' is not loaded}%
    \renewcommand\transparent[1]{}%
  }%
  \providecommand\rotatebox[2]{#2}%
  \newcommand*\fsize{\dimexpr\f@size pt\relax}%
  \newcommand*\lineheight[1]{\fontsize{\fsize}{#1\fsize}\selectfont}%
  \ifx\svgwidth\undefined%
    \setlength{\unitlength}{500.02974863bp}%
    \ifx\svgscale\undefined%
      \relax%
    \else%
      \setlength{\unitlength}{\unitlength * \real{\svgscale}}%
    \fi%
  \else%
    \setlength{\unitlength}{\svgwidth}%
  \fi%
  \global\let\svgwidth\undefined%
  \global\let\svgscale\undefined%
  \makeatother%
  \begin{picture}(1,0.32791402)%
    \lineheight{1}%
    \setlength\tabcolsep{0pt}%
    \put(0,0){\includegraphics[width=\unitlength,page=1]{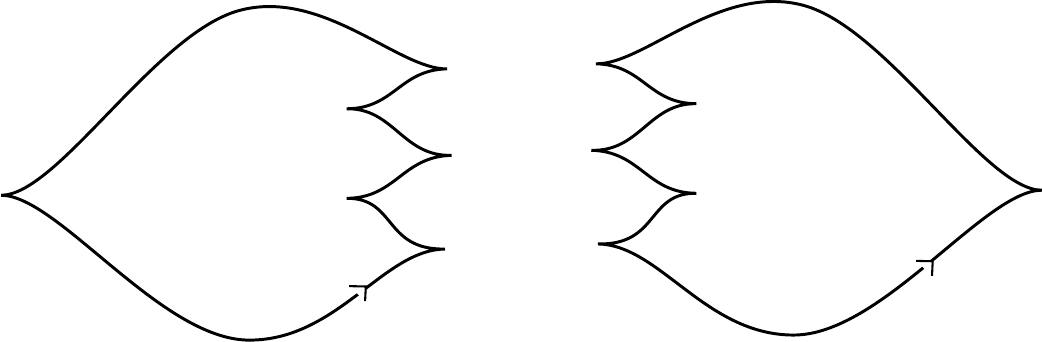}}%
    \put(-0.00123013,0.25852426){\color[rgb]{0,0,0}\makebox(0,0)[lt]{\lineheight{1.25}\smash{\begin{tabular}[t]{l}$(-1)$\end{tabular}}}}%
    \put(0.91339638,0.25421761){\color[rgb]{0,0,0}\makebox(0,0)[lt]{\lineheight{1.25}\smash{\begin{tabular}[t]{l}$(-1)$\end{tabular}}}}%
  \end{picture}%
\endgroup%

 \vspace{0.3cm}
 \caption{\smaller[1]{The Stein structures $J_1$ and $J_2$ on the Stein filling $D_{-4}$ of $L(4,1)$. Here $(-1)$ denotes contact surgery (smooth framing $\tb-1$).}}
 \label{L(4,1)}
 \end{figure}
 This means that we can use Theorem \ref{teo:main} to compute the value of $\tau$-invariant.
 
 The link $L_{2d}$ bounds a holomorphic curve $C'$ consisting of $2d$ meridional disks in $D_{-4}$. We then have \[2\tau(L_{2d},\s_i)-2d=-2d-[C']\cdot[C']+c_1(J_i)[C']\] and thus \[\tau(L_{2d},\s_i)=-\dfrac{[C']\cdot[C']-c_1(J_i)[C']}{2}\:.\]
 We can compute $c_1(J_i)[C']$ and $[C']\cdot[C']$ using the formulae in Section \ref{section:four}: \[[C']\cdot[C']=2d\cdot-\dfrac{\det\left(\begin{matrix}
  0 & 1\\
  1 & -4
 \end{matrix} \right)}{\det (-4)}+2\cdot\binom{2d}{2}\cdot(1)\left(-\dfrac{1}{4}\right)(1)=-\dfrac{d}{2}-\dfrac{d(2d-1)}{2}=-d^2\] and \[c_1(J_i)[C']=2d\cdot(\pm 2)\left(-\dfrac{1}{4}\right)(1)=\pm d\:.\] Depending on whether $i=1$ or $i=2$ we have \[\tau(L_{2d},\s_i)=\dfrac{d(d+1)}{2}\hspace{1cm}\text{ and }\hspace{1cm}\tau(L_{2d},\s_i)=\dfrac{d(d-1)}{2}\:,\] but since the first value coincides with the one of $\tau_\xi(L_{2d})$ it follows that \[\tau(L_{2d},\overline{\s_\xi})=\dfrac{d(d-1)}{2}\:.\]
 The second part of the statement now follows from Theorem \ref{teo:PL} and Corollary \ref{cor:concordance}.
\end{proof}

\subsection{A family of quasi-positive links in \texorpdfstring{$\mathbf{\emph{L}(9,2)}$}{L(9,2)}}
\label{subsection:7.2}
Start with a degree $d$ smooth curve $Z\subset \C P^2$. Pick a generic line $L$ and a point $p\in \C P^2 \setminus Z\cup L$. Blowing-up at $p$ we get a smooth four-manifold $F_1=\C P^2 \# \overline{\C P^2}$ together with a well defined projection $\pi:F_1\to L\simeq\C P^1$ whose fibres are the strict transforms of the lines through $p$. This is the first Hirzebruch surface, see Figure \ref{Nagata} (left). We denote by $E_1=L'$ the strict transform of $L$ and by $E_{-1}$ the exceptional divisor of the blow-up. Obviously $Z_1=Z'$, the strict transform of $Z$, does not intersect $E_{-1}$ and does intersect $E_1$ in $d$ points. 
Furthermore, $Z_1$ intersects the generic fibres of the projection $F_1\to \C P^1$ in $d$ points.

We now want to pass from the first Hirzebruch surface $F_1\to \C P^1$ to the second Hirzebruch surface $F_2\to \C P^1$. To do so we first blow-up at the intersection $q$ of $E_{-1}$ with a generic fibre $f=\pi^{-1}(t)$, and then we blow-down the strict transform $f'$ of the fibre. The resulting birational transformation 

\[
\begin{tikzcd}
 &\arrow{dl} (F_1)'\arrow{dr} \\
F_1  \arrow{rr}{\nu} && F_1
\end{tikzcd}
\]
is the so called Nagata transform, see Figure \ref{Nagata} (right). We would like to understand the push-forward $Z_2= \nu_*(Z_1)=\overline{\nu(Z_1\setminus p)}$ of the curve $Z_1$.  

\begin{figure}[t]
 \centering
 \vspace{0.6cm}
  \def\svgwidth{0.82\textwidth}
        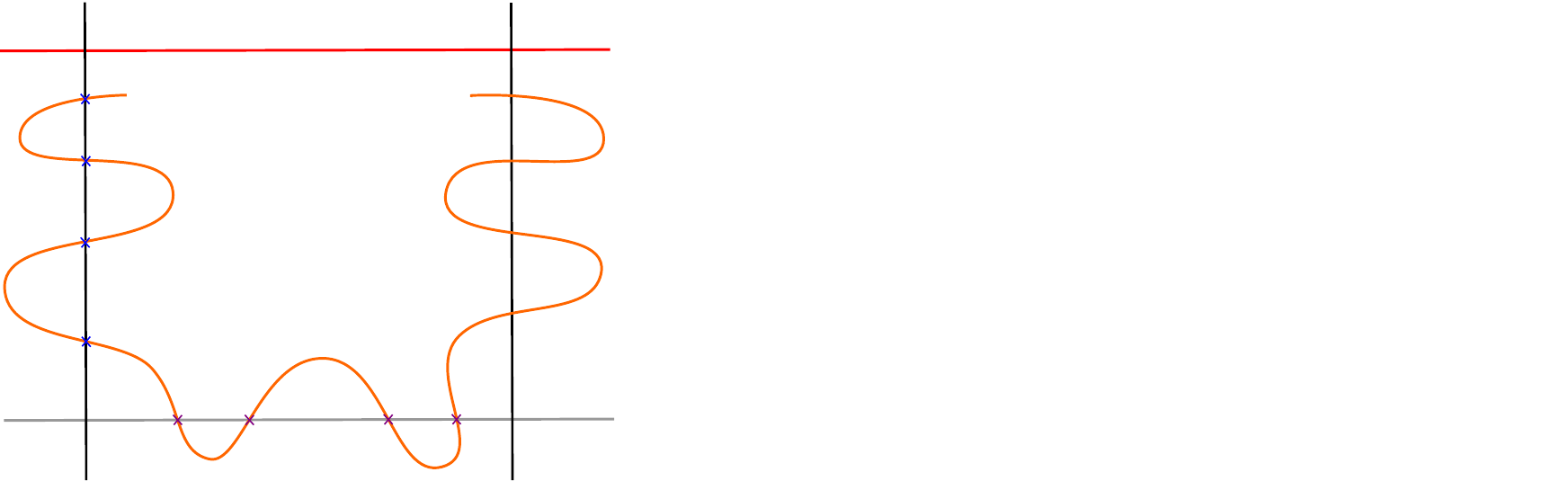
  \vspace{0.3cm}      
 \caption{\smaller[1]{The Nagata transform of the Hirzebruch surface $F_1$.}}
 \label{Nagata}
 \end{figure}

 \begin{figure}[t]
\vspace{0.9cm}
 \centering
  \def\svgwidth{0.6\textwidth}
        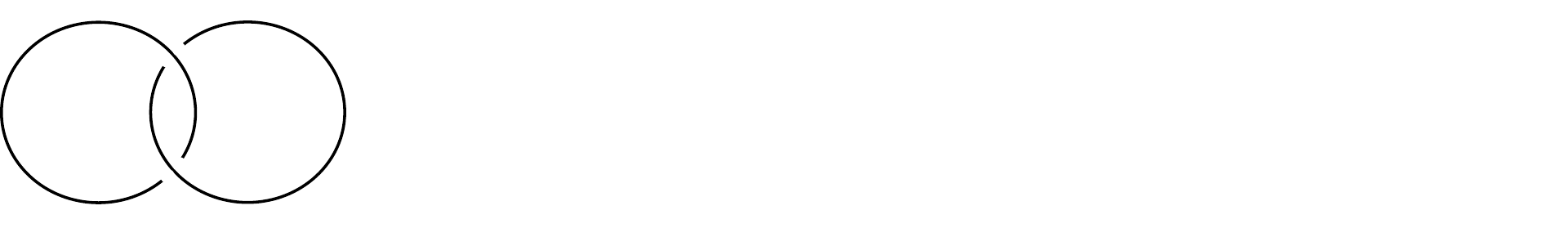
 \vspace{0.3cm}       
        \caption{\smaller[1]{A diffeomorphism of $L(9,2)$.}}
 \label{Move}
\end{figure}

First we observe that $Z_1'$, the strict transform of $Z_1'$ trough the  blow-up $(F_1)'\to F_1$, is a smooth genus $g=(d-1)(d-2)/2$ curve intersecting $f'$ in $d$ points. So when we blow-down $(F_1)'\to F_2$  to perform the Nagata transform these $d$ points get identified together transforming $Z''$ into a singular curve $\tau_*(Z)$. Note that the strict transforms $E_1'$ and $E_{-1}'$ correspond through $(F_1)'\to F_2$ to sections $E_2$ and $E_{-2}$ with self-intersection $\pm2$. 
Furthermore, the push-forward $F$ of the exceptional divisor $E$ of the blow-up $(F_1)'\to F_1$ is a fibre of the projection $F_2\to \C P^1$ intersecting $Z_2$ with multiplicity $d$ in its unique singular point. Summarising, we have the following identities:  
\[F\cdot E_2 = F \cdot E_{-2}=1,   \ \ \ F^2=0 , \ \ \  E_2\cdot E_2=+2,  \ \ \  E_{-2}\cdot E_{-2}=-2, \ \ \  Z_2\cdot F=d \ .\]
Using the singular curve $Z_2\subset F_2$ we produce a  link $M_{3d}$  with $3d$ components in $L(9,2)$. To do so we choose a generic fibre of $\pi: F_2 \to \C P^1$ and we observe that after desingularising the unique double point $r$ of $E_{2}\cup \pi^{-1}(t)$ we get a smooth sphere $S_{+4}\subset F_2$ with self-intersection $+4$. This intersects $Z_2$ in $2d+1$ points (one singular, $2d$ smooth),  $E_{-2}$ in one point, and a neighbourhood of $E_{-2}\cup S_4$ gives a smooth embedding $P_{\Gamma}\hookrightarrow F_2$ of the plumbing $P_\Gamma$ with graph
 \[\begin{tikzpicture}[scale=0.3]
    \tkzDefPoints{0/0/-2, 5/0/4}
    \tkzDrawPolygon(-2,4)\tkzDrawPoints(-2,4)
    \tkzLabelPoints[above left](-2)
    \tkzLabelPoints[above right](4)
  \end{tikzpicture} \:.\]
The complement of said embedding is a rational homology ball $W=F_2 \setminus \nu(S_{+4}\cup E_{-2})$ in a Stein domain with boundary $\partial W=L(9,2)$. We define $M_{3d}$ as the intersection of the curve $Z_2$ with the hyper-surface $L(9,2) \simeq \partial W\subset F_2$. To make a drawing of the link $M_{3d}$ we look at the configuration $E_{-1}'\cup S_{+4}'\cup f'$ in $(F_1)'$. Here $S_{+4}'$ denotes the strict transform of $S_{+4}$ through the blow-up $(F_1)'\to F_2$. We note that the curve $Z_1'\subset (F_1)'$ does not intersect $E_{-1}'$, it intersects $S_{+4}'$ in $2d$ points, and it intersects the fibre $F$ in $d$ points.

So $M_{3d}$ is presented by the diagram
in Figure \ref{M_3d}, where the diffeomorphism $ \partial W\to  \partial W$ in the picture is the one induced by the blow-up $(F_1)' \to F_2$ in Figure \ref{Move}.

We have that $M_{3d}$ bounds a connected holomorphic curve $C=Z_2\cap W$ in $(W,J)$ with genus $\frac{(d-1)(d-2)}{2}$. This implies that \[2\tau_\xi(M_{3d})-3d=-\chi(C)=(d-1)(d-2)+3d-2=d^2\:,\] and then \[\tau_\xi(M_{3d})=\frac{d(d+3)}{2}\] where $\xi$ is the Stein fillable structure on $L(9,2)$ induced by $(W,J)$.
Since $L(9,2)$ is a lens space, we have shown that $\tau(M_{3d},\s_\xi)=\frac{d(d+3)}{2}$. 

As in the previous subsection, the spin structure $\s_\xi$ should be one of the two (conjugate) structures with correction term zero, but in this case since $|H_1(L(9,2);\Z)|=9$ we also have a self-conjugate $\Spin^c$-structure $\mathfrak s_0$, induced by the unique spin structure on $L(9,2)$. While $\s_\xi$ and $\overline{\s_\xi}$ are both restriction of a Stein structure, the structure $\mathfrak s_0$ does not even come from a tight structure on $L(9,2)$.

There are three possible $\tau$'s: one of them is $\tau_\xi(M_{3d})$, which we already computed, and the other two are $\tau(M_{3d},\mathfrak s_0)$ and $\tau(M_{3d},\overline{\s_\xi})$.
We cannot compute $\tau(M_{3d},\overline{\s_\xi})$ as we did for $\tau(L_{2d},\overline{\s_\xi})$ because the link $M_{3d}$ does not necessarily bound a holomorphic curve in the Stein filling $D_{-5,-2}$ displayed in Figure \ref{L(9,2)}.

Nevertheless, such a four-manifold has four Stein structures, two of which, call them $J_1$ and $J_2$ as in Figure \ref{L(9,2)}, restricts to $\s_\xi$ and $\overline{\s_\xi}$ respectively. This means that we can use Theorem \ref{teo:main} to estimate the value of $\tau(M_{3d},\overline{\s_\xi})$.

A simple computation \cite{LOSSz} shows that $\s_{J_i}$ for $i=1,2$, corresponding to the two Stein structures in which we are interested, is determined by the Chern class $c_1(J_i)=(\pm3,0)$. 
The link $M_{3d}$ has a transverse representative with self-linking number equal to $-d^2-2d$  and bounds a (smooth) curve $C'$ consisting of $2d$ meridional disks and a connected component of genus $\frac{(d-1)(d-2)}{2}$ in $D_{-5,-2}$. We then have  
\[2\tau(M_{3d},\s_i)-3d=-2d+(d-1)(d-2)+d-2-[C']\cdot[C']+c_1(J_i)[C']\] and thus \[\dfrac{-d^2+d}{2}\leq\tau(M_{3d},\s_i)+\dfrac{[C']\cdot[C']-c_1(J_i)[C']}{2}\leq\dfrac{d^2-d}{2}\:.\]
We again compute $c_1(J_i)[C']$ and $[C']\cdot[C']$ using the formulae in Section \ref{section:four}: 

\begin{figure}[t]
 \centering
  \def\svgwidth{8.5cm}
\begingroup%
  \makeatletter%
  \providecommand\color[2][]{%
    \errmessage{(Inkscape) Color is used for the text in Inkscape, but the package 'color.sty' is not loaded}%
    \renewcommand\color[2][]{}%
  }%
  \providecommand\transparent[1]{%
    \errmessage{(Inkscape) Transparency is used (non-zero) for the text in Inkscape, but the package 'transparent.sty' is not loaded}%
    \renewcommand\transparent[1]{}%
  }%
  \providecommand\rotatebox[2]{#2}%
  \newcommand*\fsize{\dimexpr\f@size pt\relax}%
  \newcommand*\lineheight[1]{\fontsize{\fsize}{#1\fsize}\selectfont}%
  \ifx\svgwidth\undefined%
    \setlength{\unitlength}{608.61961064bp}%
    \ifx\svgscale\undefined%
      \relax%
    \else%
      \setlength{\unitlength}{\unitlength * \real{\svgscale}}%
    \fi%
  \else%
    \setlength{\unitlength}{\svgwidth}%
  \fi%
  \global\let\svgwidth\undefined%
  \global\let\svgscale\undefined%
  \makeatother%
  \begin{picture}(1,0.29415754)%
    \lineheight{1}%
    \setlength\tabcolsep{0pt}%
    \put(0,0){\includegraphics[width=\unitlength,page=1]{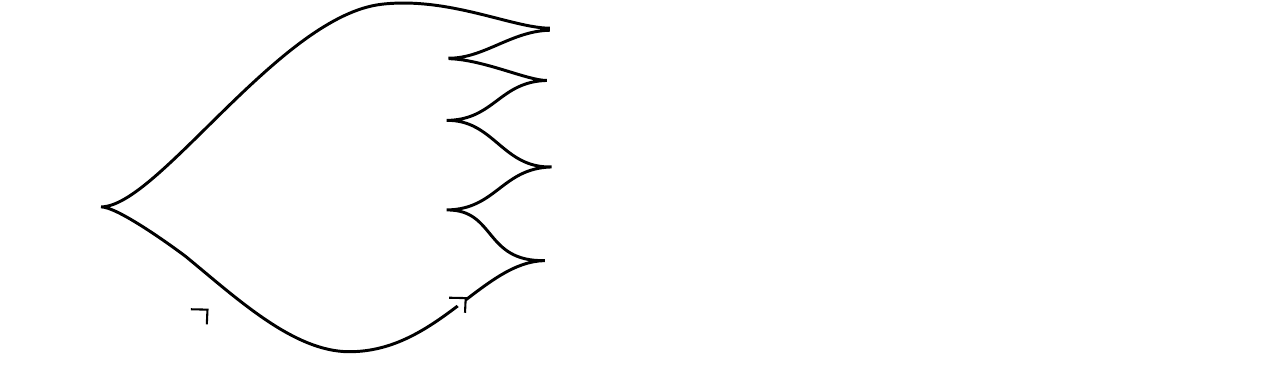}}%
    \put(0.14762268,0.28128076){\color[rgb]{0,0,0}\makebox(0,0)[lt]{\lineheight{1.25}\smash{\begin{tabular}[t]{l}$(-1)$\end{tabular}}}}%
    \put(0.02422246,0.00353822){\color[rgb]{0,0,0}\makebox(0,0)[lt]{\lineheight{1.25}\smash{\begin{tabular}[t]{l}$(-1)$\end{tabular}}}}%
    \put(0.77886233,0.27014213){\color[rgb]{0,0,0}\makebox(0,0)[lt]{\lineheight{1.25}\smash{\begin{tabular}[t]{l}$(-1)$\end{tabular}}}}%
    \put(0.92527672,0.01996441){\color[rgb]{0,0,0}\makebox(0,0)[lt]{\lineheight{1.25}\smash{\begin{tabular}[t]{l}$(-1)$\end{tabular}}}}%
    \put(0,0){\includegraphics[width=\unitlength,page=2]{L_9,2_.pdf}}%
  \end{picture}%
\endgroup%
       
 \caption{\smaller[1]{The Stein structures $J_1$ and $J_2$ on the Stein filling $D_{-5,-2}$ of $L(9,2)$.}}
 \label{L(9,2)}
\end{figure} 

\[\begin{aligned}\relax
[C']\cdot[C']&=3d\cdot-\dfrac{\det\left(\begin{matrix}
  0 & 0 & 1 \\
 0 & -2 & 1 \\
  1 & 1 & -5
 \end{matrix} \right)}{\det\left(\begin{matrix}
  -2 & 1 \\
   1 & -5
 \end{matrix} \right)}+2\cdot\binom{3d}{2}\cdot\left(\begin{matrix}1 & 0\end{matrix}\right)\left(\begin{matrix}
     -\dfrac{2}{9} & -\dfrac{1}{9} \\ \\
    -\dfrac{1}{9} & -\dfrac{5}{9}
 \end{matrix}\right)\left(\begin{aligned} &1 \\ &0\end{aligned}\right)= \\ 
 &=-\dfrac{2d}{3}-\dfrac{6d(3d-1)}{9}=-2d^2
 \end{aligned}\] and \[c_1(J_i)[C']=3d\cdot\left(\begin{matrix}\pm 3 & 0\end{matrix}\right)\left(\begin{matrix}
     -\dfrac{2}{9} & -\dfrac{1}{9} \\ \\
     -\dfrac{1}{9} & -\dfrac{5}{9}
 \end{matrix}\right)\left(\begin{matrix}1 \\ 0\end{matrix}\right)=\pm 2d\:.\]

Comparing the value of $\tau_\xi(M_3)$ with what we obtained yields 
$c_1(J_1)[C']=-2d$ and $c_1(J_2)[C']=2d$. Hence, we can write
\[\dfrac{d(d-1)}{2}\leq\tau(M_{3d},\overline{\s_\xi})\leq\dfrac{3d(d-1)}{2}\:.\]
We are ready to prove the following proposition.
\begin{prop}
 We have that $\left|\tau_{\max}(M_{3d})-\tau_{\min}(M_{3d})\right|>0$, and then $\mathfrak g_{\text{PL}}(M_{3d})>0$ for $d=1,2$. In particular, $M_{3}$ and $M_6$ are not rational homology concordant to any link in $S^3$.    
\end{prop}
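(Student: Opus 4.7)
The plan is to feed the $\tau$-invariant computations just carried out for $(M_{3d},\s_\xi)$ and $(M_{3d},\overline{\s_\xi})$ into Theorem \ref{teo:PL} and Corollary \ref{cor:concordance}. First I will observe that $\s_\xi$ and $\overline{\s_\xi}$ both extend over $W$: since $W$ is a rational homology four-ball bounding $L(9,2)$, the number of $\Spin^c$-structures on $L(9,2)$ extending over $W$ equals $\sqrt{|H_1(L(9,2);\Z)|}=3$, and these three structures must be precisely $\s_\xi$, $\overline{\s_\xi}$, and $\mathfrak s_0$, the three $\Spin^c$-structures with vanishing correction term identified in the preceding discussion. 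In particular,
\[\tau^W_{\max}(M_{3d})\geq \tau_\xi(M_{3d})=\frac{d(d+3)}{2},\qquad \tau^W_{\min}(M_{3d})\leq \tau(M_{3d},\overline{\s_\xi})\leq \frac{3d(d-1)}{2}.\]

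Next I will compare the exact value $\frac{d(d+3)}{2}$ with the upper bound $\frac{3d(d-1)}{2}$: a short calculation yields $\frac{d(d+3)}{2}-\frac{3d(d-1)}{2}=d(3-d)$, strictly positive precisely when $d\in\{1,2\}$. This arithmetic restriction is the sole reason the proposition is confined to those two cases. For $d=1$ and $d=2$ the resulting gap is at least $2$, so $|\tau^W_{\max}(M_{3d})-\tau^W_{\min}(M_{3d})|>0$. From here Theorem \ref{teo:PL} immediately forces $g^*_{\text{PL}}(M_{3d})>0$, and Corollary \ref{cor:concordance} rules out any rational homology concordance between $M_{3d}$ and a local link in a rational homology three-sphere. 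Since a link in $S^3$ may always be viewed as a local link inside $L(9,2)$ (by placing $S^3$ inside an embedded ball), $M_3$ and $M_6$ cannot be rational homology concordant to any link in $S^3$.

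No serious obstacle is expected: the argument is a direct assembly of results and calculations already in place immediately before the statement. The only genuinely interesting residual question is what happens for $d\geq 3$, where the direct comparison between $\tau_\xi(M_{3d})$ and the upper bound for $\tau(M_{3d},\overline{\s_\xi})$ ceases to yield a gap; addressing it would require either a sharper estimate on $\tau(M_{3d},\overline{\s_\xi})$ or information about $\tau(M_{3d},\mathfrak s_0)$, and reflects precisely the content of the introductory conjecture about the links $N_{3d}$.
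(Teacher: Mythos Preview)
Your proposal is correct and follows essentially the same approach as the paper: identify that the three $\Spin^c$-structures with vanishing correction term (including $\s_\xi$ and $\overline{\s_\xi}$) extend over $W$, compare $\tau_\xi(M_{3d})=\tfrac{d(d+3)}{2}$ with the upper bound $\tfrac{3d(d-1)}{2}$ for $\tau(M_{3d},\overline{\s_\xi})$, observe that the difference $d(3-d)$ is positive exactly for $d=1,2$, and then invoke Theorem~\ref{teo:PL} and Corollary~\ref{cor:concordance}. Your closing remark about local links is a harmless elaboration; Corollary~\ref{cor:concordance} already covers links in $S^3$ directly, since any link in $S^3$ is local in $S^3$ itself.
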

\begin{proof}
 Since $L(9,2)$ is a lens space, all three $\Spin^c$-structures on $L(9,2)$, with vanishing correction term, extends to $W$. 
 In order to get positive difference between $\tau_\xi(M_{3d})$ and $\tau(M_{3d},\overline{\s_\xi})$ we need $\frac{3d(d-1)}{2}<\frac{d(d+3)}{2}$, which translates to $d^2<3d$. This implies that when $d=1,2$ we have \[\left|\tau_{\max}(M_{3d})-\tau_{\min}(M_{3d})\right|\geq\left|\dfrac{3d(d-1)}{2}-\dfrac{d(d+3)}{2}\right|=3d-d^2>0\] and the statement follows from Theorem \ref{teo:PL} and Corollary \ref{cor:concordance}.
\end{proof}

In the case that $d=1$ we can also compute $\tau(M_{3},\mathfrak s_0)$, but we need to use the methods of lattice cohomology. In \cite{Alfieri} the first author came up with a formula to compute the upsilon invariant of a graph knot, see \cite[Section 2]{Alfieri} for details. A similar formula can  be shown to be true for any link in a rational graph. This is a fairly simple adaptation of the argument in \cite[Section 4.4]{Alfieri}. 

\begin{teo}\label{teo:Antonio}
 If $L$ is an $\ell$-component link described by some unmarked leaves of a negative-definite almost-rational tree $G$, then 
 \[\tau(Y(G),L,\s)=\frac{1}{2}\left(\min_{k} k \cdot \Sigma -\Sigma\cdot\Sigma\right) \] 
 where the minimum is taken over all characteristic vectors $k\in H_2(X(G);\Z)$ for the intersection form of the associated plumbing $X(G)$ restricting to the $\Spin^c$-structure $\s$, and $\Sigma$ is the obvious collection of  disk fibres in the filling $X(G)$ bounding $L$.
\end{teo}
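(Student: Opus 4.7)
The plan is to establish the formula by combining the lattice cohomology model for the Heegaard Floer chain complex of a negative-definite plumbed three-manifold with the description of the link via unmarked leaves of the plumbing graph. The argument will be a direct generalisation from the knot case treated in \cite[Section 4.4]{Alfieri}. Throughout $Y(G)$ will denote the boundary of the plumbing $X(G)$, which is a rational homology sphere since $G$ is a rational tree.

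First, I would invoke the Ozsv\'ath--Szab\'o identification (as refined by N\'emethi and Alfieri) of $CF^-(Y(G),\s)$ with a combinatorial lattice complex $\mathbb{CF}^-(G,\s)$, freely generated over $\F[U]$ by characteristic vectors $k\in H_2(X(G);\Z)$ restricting to $\s$, with Maslov grading a quadratic function of $k$. Since $G$ is rational, there is exactly one $U$-tower per $\Spin^c$-structure, so the combinatorial model faithfully represents the $\s$-summand for the purposes of minimum-filtration computations.

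Second, I would package each unmarked leaf $v$ as a pair consisting of a link component together with its meridional disk fibre $\Sigma_v\subset X(G)$, and set $\Sigma=\sum_v\Sigma_v$. The Alexander filtration on the link Floer complex $CFL^-(Y(G),L,\s)$ is expected to transport, under the lattice identification, to the filtration on $\mathbb{CF}^-(G,\s)$ whose filtration level at the generator $k$ is the cobordism grading-shift $\tfrac12(k\cdot\Sigma-\Sigma^2)$; this is exactly the value predicted by the $2$-handle attachment formula applied to the plumbing handles dual to the leaves supporting $L$. Once this combinatorial filtration is identified with the geometric Alexander filtration, the formula for $\tau$ follows at once: by definition, $\tau(L,\s)$ is the minimum filtration level of a cycle generating the $\s$-tower, which in the lattice model is literally $\min_k\tfrac12(k\cdot\Sigma-\Sigma^2)$.

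The hard part will be proving that the combinatorial filtration on lattice cohomology matches the geometric Alexander filtration on the link Floer complex. For a single knot this is essentially the content of \cite[Section 4.4]{Alfieri} and proceeds via a chain-level analysis of the large-surgery formula. Extending the argument to links amounts to replacing the single Alexander grading by a multi-grading and then projecting along the direction cut out by $\Sigma$; the bookkeeping is heavier but the mechanism is unchanged. For the multi-component step I would draw on the link surgery framework of Manolescu--Ozsv\'ath--Thurston and on the recent developments recorded in \cite{BLZ}.
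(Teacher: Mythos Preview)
Your proposal is correct and follows essentially the same route as the paper: the paper does not give a detailed proof of this theorem but rather states that it is ``a fairly simple adaptation of the argument in \cite[Section 4.4]{Alfieri}'' for the knot case, and refers to \cite{BLZ} for the more general link setting---precisely the ingredients and strategy you outline. Your sketch is in fact more explicit than what the paper itself records.
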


We apply this theorem to our setting. We take $v=(a\: b)$ as a covector. Then \[0=d(\mathfrak s)=\dfrac{v^2[X]-3\sigma(X)-2b_2(X)}{4}\] which implies \[-2=v^2[X]=\left(\begin{matrix} a & b \end{matrix}\right)\left(\begin{matrix}
     -\dfrac{2}{9} & -\dfrac{1}{9} \\ \\
     -\dfrac{1}{9} & -\dfrac{5}{9}
 \end{matrix}\right)\left(\begin{matrix} a \\ b \end{matrix}\right)=-\dfrac{1}{9}\cdot\left(\begin{matrix} a & b \end{matrix}\right)\left(\begin{matrix} 2a+b \\ a+5b \end{matrix}\right)=-\dfrac{1}{9}\cdot(2a^2+2ab+5b^2)\] and therefore we obtain \begin{equation}2a^2+2ab+5b^2=18\:.
  \label{eq:quadric}
\end{equation}
We now compute $\tau(M_{3},\mathfrak s_0)$. The only integer vectors which satisfy Equation \eqref{eq:quadric} are $(\pm\:3,0)$ and $(\pm\:1,\mp2)$, but the first pair corresponds to Stein fillable structures. Hence, we are left with the second pair which yields to $v\cdot\Sigma=0$ regardless of $\Sigma$.
We then conclude that \[\tau(M_3,\mathfrak s_0)=-\dfrac{[C']\cdot[C']}{2}=1\] and we can write 

\begin{equation}
  \tau(M_3,\s_i)=\left\{\begin{aligned}
    &2\hspace{1cm}\text{ for }i=1 \\
    &1\hspace{1cm}\text{ for }i=0 \\
    &0\hspace{1cm}\text{ for }i=2
  \end{aligned}  \right. \:.
  \label{eq:M_3}
\end{equation}

Note that the computation in Equation \eqref{eq:M_3} agrees with the one performed by Celoria with his computer program. 
For the final result of the paper we consider the link $N_k$ in $L(9,2)$ shown in Figure \ref{N_k}. Such a link bounds disjoint holomorphic disks in the Stein structures $J_1$ and $J_2$ in Figure \ref{L(9,2)}. Hence, the same procedure used for $M_{3d}$ and Theorem \ref{teo:Antonio} yields: \[\tau(N_k,\s_i)=\left\{\begin{aligned}
    &\dfrac{k^2+3k}{9}\:\hspace{1cm}\text{ for }i=1 \\
    &\dfrac{k^2}{9}\hspace{2cm}\text{ for }i=0 \\
    &\dfrac{k^2-3k}{9}\:\hspace{1cm}\text{ for }i=2
  \end{aligned}  \right. \:.\]


\begin{figure}[t]
 \centering
  \def\svgwidth{4.5cm}
\begingroup%
  \makeatletter%
  \providecommand\color[2][]{%
    \errmessage{(Inkscape) Color is used for the text in Inkscape, but the package 'color.sty' is not loaded}%
    \renewcommand\color[2][]{}%
  }%
  \providecommand\transparent[1]{%
    \errmessage{(Inkscape) Transparency is used (non-zero) for the text in Inkscape, but the package 'transparent.sty' is not loaded}%
    \renewcommand\transparent[1]{}%
  }%
  \providecommand\rotatebox[2]{#2}%
  \newcommand*\fsize{\dimexpr\f@size pt\relax}%
  \newcommand*\lineheight[1]{\fontsize{\fsize}{#1\fsize}\selectfont}%
  \ifx\svgwidth\undefined%
    \setlength{\unitlength}{289.94725745bp}%
    \ifx\svgscale\undefined%
      \relax%
    \else%
      \setlength{\unitlength}{\unitlength * \real{\svgscale}}%
    \fi%
  \else%
    \setlength{\unitlength}{\svgwidth}%
  \fi%
  \global\let\svgwidth\undefined%
  \global\let\svgscale\undefined%
  \makeatother%
  \begin{picture}(1,0.56724245)%
    \lineheight{1}%
    \setlength\tabcolsep{0pt}%
    \put(0,0){\includegraphics[width=\unitlength,page=1]{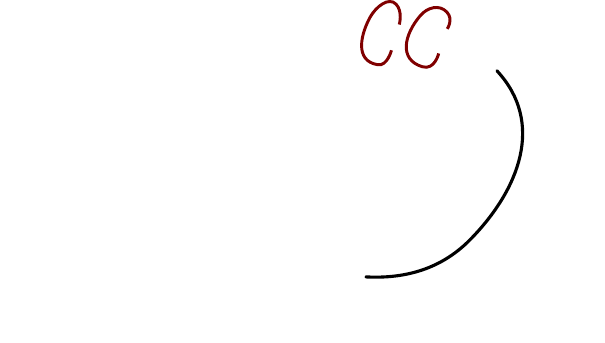}}%
    \put(0.88991444,0.13626852){\color[rgb]{0,0,0}\makebox(0,0)[lt]{\lineheight{1.25}\smash{\begin{tabular}[t]{l}$N_k$\end{tabular}}}}%
    \put(0,0){\includegraphics[width=\unitlength,page=2]{N_k.pdf}}%
    \put(0.18187297,0.00656823){\color[rgb]{0,0,0}\makebox(0,0)[lt]{\lineheight{1.25}\smash{\begin{tabular}[t]{l}$-2$\end{tabular}}}}%
    \put(0.61569922,0.01165216){\color[rgb]{0,0,0}\makebox(0,0)[lt]{\lineheight{1.25}\smash{\begin{tabular}[t]{l}$-5$\end{tabular}}}}%
    \put(0,0){\includegraphics[width=\unitlength,page=3]{N_k.pdf}}%
  \end{picture}%
\endgroup%
       
 \caption{\smaller[1]{The $k$-component link $N_k$ in $L(9,2)$.}}
 \label{N_k}
 \end{figure}

\begin{proof}[Proof of Proposition \ref{prop:last}]
 We apply Corollary \ref{obs2} and the fact that $\tau(N_{3d},\s_2)$ is not maximal to obstruct the existence of a pseudo-holomorphic curve in $(W,-J)$. For the case of $(W,J)$ we just observe that when $k$ is not a multiple of 3, the value of $\tau(N_{k},\s_1)$ is not an integer, and this would contradict Theorem \ref{teo:main}. 
\end{proof}

\end{document}